\newtheorem{theorem}{Theorem}[section]
\newtheorem{lemma}[theorem]{Lemma}
\newtheorem{corollary}[theorem]{Corollary}
\newtheorem{assumption}[theorem]{Assumption}
\newtheorem{remark}[theorem]{Remark}
\providecommand{\norm}[1]{\left|  \left|  #1 \right| \right|}
\newcommand{\secref}[1]{\S \ref{#1}}
\newcommand{\figref}[1]{Figure \ref{#1}}
\newcommand{\DkF}[1]{(F_{n+\frac{#1}{S}}-F_n)}
\begin{document}

\begin{frontmatter}
\title{New efficient substepping methods for exponential timestepping.}
\author[HW]{G. J. Lord\fnref{fn1}}
\ead{G.J.Lord@hw.ac.uk}
\author[HW]{D. Stone\corref{cor1}\fnref{fn2}}
\ead{ds218@hw.ac.uk}
\cortext[cor1]{Corresponding Author}
\address[HW]{Department of Mathematics \& Maxwell Institute, Heriot-Watt University, Edinburgh}

\fntext[fn1]{Telephone: +44 (0)131 451 8196}
\fntext[fn2]{Telephone: +44 (0)131 451 3432}

\begin{abstract}
Exponential integrators are time stepping schemes which exactly solve
the linear part of a semilinear ODE system. This class of schemes
requires the approximation of a matrix exponential in every step, and
one successful modern method is the Krylov subspace projection
method. We investigate the effect of
breaking down a single timestep into arbitrary multiple substeps,
recycling the Krylov subspace to minimise costs. 
For these recyling based schemes we analyse the local error, 
investigate them numerically and 
show they can be applied to a large system with $10^6$ unknowns.
We also propose a new second order integrator that is found using the extra
information from the substeps to form a corrector to increase the
overall order of the scheme. This scheme is seen to compare favourably
with other order two integrators.
\end{abstract}
\begin{keyword}
Exponential Integrators \sep Krylov Subspace Methods
\end{keyword}
\end{frontmatter}

\section{Introduction}

\label{kryrec_intro}
We consider the numerical integration of a large system of semilinear ODEs of the form
\begin{equation}
\frac{du}{dt} =  Lu + F(t,u(t)) \qquad u(0)=u_0, \quad t \in [0, \infty)
\label{semilin}
\end{equation}
with $u,F(t,u(t)) \in \mathbb{R}^N$ and $L\in \mathbb{R}^{N\times N}$
a matrix. Equation \eqref{semilin} arises, for example, from the spatial
discretisation of reaction-diffusion-advection equations. An
increasingly popular method for approximating the 
solution of semlinar ODE systems such as \eqref{semilin} are
exponential integrators. 
These are a class of schemes which approximate \eqref{semilin} by
exactly solving the linear part and are characterised by requiring the
evaluation or approximation of a matrix exponential function of $L$ at
each timestep. A major class of exponential integrators are the
multistep Exponential Time Differencing (ETD) schemes, first developed
in \cite{cm}, other classes include the Exponential Euler Midpoint
method \cite{LEM} and Rosenbrock type methods \cite{rosenbrock_pre,
  rosenbrock}. For an overview of exponential integrators see \cite{Overview,wright-minchev-overview}
and other useful references can be found in  \cite{KT4th}. 

Approximating the matrix exponential and functions of it (like
$\varphi-$functions in \eqref{phigen} below) is a notoriously
difficult problem \cite{19-ways}. A classical technique is Pad\'e
approximation, which 
is only efficient for small matrices. Modern methods range from Taylor
series methods making sophisticated use of scaling and squaring for
efficiency, \cite{Higham-scal}, to approximation with Faber or
Chebyschev polynomials \cite{moret_faber}, \cite{Overview} \S 4.1,
interpolation on Leja points \cite{fast-leja, relpm, mar-par,
  lej-kry-comp, Lejaref}, to Krylov subspace projection techniques
\cite{saad92, hoch-kry, phivpaper, NW, Tok2010kry} which is what we
consider here.

Our schemes are based on the standard exponential integrator ETD1, which can be written as
\begin{equation}
u_{n+1}^{etd} = u_{n}^{etd} + \Delta t \varphi_1(\Delta t L)\left(Lu_n^{etd} +F_n^{etd} \right).
\label{etd1}
\end{equation}
where $ \varphi_1(\Delta t L)$ is defined shortly; $u_n^{etd}
\approx u(t_n)$ at discrete times $t_n= n \Delta t$ for fixed $\Delta
t>0$, $F_n^{etd} \equiv F(t_n,u_n^{etd})$0 and $n\in \mathbb{N}$.
ETD1 is globally first order, and is derived from \eqref{semilin} by variation of constants and approximating $F(t,u(t))$ by the constant $F_n^{etd}$ over one timestep. See for example \cite{Overview, wright-minchev-overview, KT4th, myThesis} for more detail.
It is useful to introduce the additional notation
$$
g(t) \equiv Lu(t) + F(t, u(t)) \quad \text{and} \quad g_n^{etd} \equiv Lu_n^{etd} +F(t_n, u_n^{etd}).
$$
The function $\varphi_1$ is part of a family of matrix exponential functions defined by
$\varphi_0(z) = e^z$, $\varphi_1(z) = z^{-1} \left( e^z - I \right),$ 
and in general
\begin{equation}
\varphi_{k+1}(z) = z^{-1}\left( \varphi_{k+1} - \frac{I}{k!} \right),
\label{phigen}
\end{equation}
where $I$ is the identity matrix. These $\varphi-$functions appear in all exponential integrator schemes; see \cite{NW}. In particular we use $\varphi_1$, and for brevity we introduce the following notation
\begin{equation}
p_{\tau} \equiv \tau \varphi_1(\tau L).
\label{pdef}
\end{equation}
We can then re-write (\ref{etd1}) as
\begin{equation}
u_{n+1}^{etd}= u_{n}^{etd} + p_{\Delta t} g_n^{etd} 
\label{etd1again}
\end{equation}
We consider the Krylov projection method for approximating terms like
$p_{\Delta t} g_n^{etd}$ in (\ref{etd1}). In the Krylov method, this 
term is approximated on a Krylov subspace defined by the vector $g_n$
and the matrix $L$. Typically the subspace is recomputed, in the form
of a matrix of basis vectors $V_m$, every time the solution vector,
$u_n$ in (\ref{etd1}), is updated (and thus also $g_n$). This is done
using a call to the Arnoldi algorithm \cite{saad92}, and is often the
most expensive part of each step. It is possible to `recycle' this
matrix at least once, as demonstrated in \cite{2steppap} for the
exponential Euler method (EEM) (see \cite{Overview} and \eqref{eq:EEM}).
In this paper we investigate this possibility further and use it to construct
new methods based on ETD1 and in \ref{App:EEM} we show how to
construct the general recycling method for EEM.

We examine the effect of splitting the single step of (\ref{etd1}) of
length $\Delta t$ in to $S$ substeps of length $\delta t =
\frac{\Delta t}{S}$, through which the Krylov subspace and matrices
are recycled. By deriving expressions for the local error, we show
that the scheme remains locally second order for any number $S$ of
substeps, and that the leading term of the local error decreases. 
This gives a method based on recyling the Krylov subspace for $S$
substeps. We then obtain a second method
using the extra information from the substeps to form a corrector to
increase the overall order of the scheme. 

The paper is arranged as follows. In Section \ref{sec-kry-proj} we
describe the Krylov 
subspace projection method for approximating the action of
$\varphi-$functions on vectors. In Section \ref{sec:recyc} we describe
the concept 
of recycling the Krylov subspace across substeps in order to increase
the accuracy of the ETD1 based scheme, and show that the leading term of the local error of the scheme decreases as the number of substeps uses increases. We then prove a lemma to express the local error expression at arbitrary order. With this information about the local error expansion, and the extra information from the substeps taken, it is possible to construct correctors for the scheme the increase the accuracy and local order of the scheme. We demonstrate one simple such corrector in Section \ref{cor-sec}. Numerical examples demonstrating the effectiveness of this scheme are presented in Section \ref{sec:numres}.

\section{The Krylov Subspace Projection Method and ETD1}
\label{sec-kry-proj}
We describe the Krylov subspace projection method for approximating $\varphi_1(\Delta t L)$ in (\ref{etd1}). We motivate this by showing how the leading powers of $\Delta t L$ in $L$ are captured by the subspace. 
The series definition of $\varphi_1(\Delta t L)$ is,
\begin{equation}
\varphi_1(\Delta t L) \equiv \sum_{k=0}^{\infty}{\frac{(\Delta t L)^k}{(k+1)!}}.
\label{phi1-2}
\end{equation}
The challenge in applying the scheme (\ref{etd1}) is to efficiently compute, or approximate, the action
of $\varphi_1$ on the vector $g_n^{etd}$. 
The sum  in (\ref{phi1-2}) is useful in motivating a polynomial Krylov subspace approximation. The $m$-dimensional Krylov subspace for the matrix $L$ and vector $g \in \mathbb{R}^N$ is defined by:
\begin{equation}
\mathcal{K}_m(L,g)=\mbox{span}\{g , Lg, \ldots, L^{m-1}g \}.
\label{krydef1}
\end{equation}
Approximating sum in (\ref{phi1-2}) by the first $m$ terms is equivalent to approximation in the subspace $\mathcal{K}_m(L,g_n^{etd})$ in (\ref{krydef1}). We now review some simple results about the general subspace $\mathcal{K}_m(L,g)$, with arbitrary vector $g$, before using the results with $g = g_n^{etd}$ to demonstrate how they are used in the evaluation of (\ref{etd1}). \\
The Arnoldi algorithm (see e.g. \cite{NW}, \cite{saad92}) is used to
produce an orthonormal basis $\{ v_1, \ldots, v_n \}$ for the space
$\mathcal{K}_m(L,g)$ such that 
\begin{equation}
\mbox{span}\{v_1 , v_2, \ldots, v_m \}= \mbox{span}\{g , Lg, \ldots, L^{m-1}g \} .
\label{krydef2}
\end{equation}
It produces two matrices $V_m \in \mathbb{R}^{N\times m}$, whose columns are the $v_k$, and an upper Hessenburg matrix $H_m \in \mathbb{R}^{m \times m}$. The matrices $L$,$H_m$ and $V_m$ are related by
\begin{equation}
L = V_m^T H_m V^T,
\label{HandAprime}
\end{equation}
see, for example, equation (2) in \cite{saad92}, of which \eqref{HandAprime} is a consequence. From \eqref{HandAprime} it follows that,
\begin{equation}
V_mH_mV_m^T =V_mV_m^T LV_mV_m^T.
\label{HandA}
\end{equation}
For any $x \in \mathcal{K}_m(L,g)$, 
$$
V_mV_m^Tx = x,
$$
since $V_mV_m^Tx$ represents the orthogonal projection into the space $\mathcal{K}_m(L,g)$. Therefore, since $L^kg \in \mathcal{K}_m(L,g_n)$, we also have that
$$
V_mV_m^T L^k g = L^k g \qquad \text{for } \qquad 0\leq k \leq m-1
$$
We now consider the relationship between $L^k g$ and $V_mH_m^kV_m^T g$.
\begin{lemma}
Assume $0 \leq k \leq m-1$. Then for $H_m$, $V_m$ corresponding to the Krylov subspace $\mathcal{K}(L,g)$,
\begin{equation}
  V_mH_m^kV_m^T g  = L^k g.
\end{equation}
\label{Lkrybasic}
\end{lemma}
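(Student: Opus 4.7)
The plan is to proceed by induction on $k$, using the projection identity $V_m V_m^T x = x$ valid for every $x \in \mathcal{K}_m(L,g)$ together with equation (\ref{HandA}) relating $V_m H_m V_m^T$ to $V_m V_m^T L V_m V_m^T$.

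For the base case $k=0$, the claim $V_m V_m^T g = g$ is exactly the projection property applied to $g\in \mathcal{K}_m(L,g)$, which is the $k=0$ case already noted in the discussion preceding the lemma. For the inductive step, assume $V_m H_m^k V_m^T g = L^k g$ for some $k$ with $k+1 \leq m-1$. Using the orthonormality $V_m^T V_m = I_m$ I can split
\begin{equation}
V_m H_m^{k+1} V_m^T g = V_m H_m (V_m^T V_m) H_m^k V_m^T g = (V_m H_m V_m^T)\bigl(V_m H_m^k V_m^T g\bigr).
\end{equation}
Applying the inductive hypothesis to the right-hand factor and then (\ref{HandA}) to the left-hand factor gives
\begin{equation}
V_m H_m^{k+1} V_m^T g = V_m V_m^T L V_m V_m^T L^k g.
\end{equation}
Because $k \leq m-2 \leq m-1$, we have $L^k g \in \mathcal{K}_m(L,g)$, so $V_m V_m^T L^k g = L^k g$, which simplifies the right-hand side to $V_m V_m^T L^{k+1} g$. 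Since $k+1 \leq m-1$, the vector $L^{k+1} g$ also lies in $\mathcal{K}_m(L,g)$, and a final application of the projection identity yields $V_m V_m^T L^{k+1} g = L^{k+1} g$, completing the inductive step.

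The only subtlety, and hence the part that needs care, is the bookkeeping of the index range: the argument requires both $L^k g$ and $L^{k+1} g$ to lie in the Krylov subspace, so the induction step is valid for $k \leq m-2$ and produces the conclusion for all exponents up to $m-1$. Combined with the base case this gives the lemma for every $k$ in the stated range $0 \leq k \leq m-1$. No approximation or spectral argument is needed; the result is an exact consequence of the Arnoldi relations and orthonormality of $V_m$.
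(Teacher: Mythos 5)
Your proof is correct and is essentially the paper's own argument: the paper also writes $V_mH_m^kV_m^T=(V_mH_mV_m^T)^k$ via $V_m^TV_m=I$, substitutes $V_mH_mV_m^T=\pi L\pi$ with $\pi=V_mV_m^T$, and peels off one factor at a time using $\pi L^jg=L^jg$ for $j\leq m-1$ — which is exactly your induction written as an iterated chain of equalities. Your explicit bookkeeping of the index range is a welcome clarification but not a different method.
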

\begin{proof}
We have that $V_mH_m^kV_m^T = (V_mH_mV_m^T)^k$ since $V_m^TV_m=I$. Let $\pi \equiv V_mV_m^T$, the projector into $\mathcal{K}(L,g)$, so that \eqref{HandA} can be more briefly written $V_mH_mV_m^T = \pi L \pi $. Then by (\ref{HandA}) we find
\begin{equation}
V_mH_m^kV_m^T g =  (\pi L \pi )^kg =  (\pi L \pi)^{k-1}\pi L \pi g =
(\pi L \pi)^{k-1} Lg =  \ldots  = L^k g. 
\label{handl}
\end{equation}
\end{proof}
Now consider using the vector $g = g_n^{etd}$, to generate the subspace $\mathcal{K}_m(L,g_n^{etd})$, and the corresponding matrices $H_m$, $V_m$, by the Arnoldi algorithm. By Lemma \ref{Lkrybasic} we have that, up to $k=m$,
$$
V_mH_m^kV_m^T g_n^{etd}  = L^k g_n^{etd}.
$$
Thus, inserting the approximation $L \approx V_mH_m^kV_m^T$ in $\varphi_1(\Delta t L)$ the first $m$ terms are correctly approximated. The Krylov approximation is then
\begin{equation}
\begin{split}
\Delta t \varphi_1(\Delta t L)g_n \approx \Delta t \varphi_1(\Delta t V_mH_mV_m^T)g_n &=  \Delta t V_m\varphi_1(\Delta t H_m)V_m^Tg_n \\
 &= ||g_n||\Delta t V_m\varphi_1(\Delta t H_m)e_1.
\end{split}
\label{phi1kry}
\end{equation}
Let us introduce a shorthand notation for the Krylov approximation of
the $\varphi-$function. Analogous to \eqref{pdef}, for $\tau \in
\mathbb{R}$ let
\begin{equation}
\tilde{p}_{\tau} \equiv \tau V_m \varphi_1(\tau H_m) V_m^T \approx p_{\tau} .
\label{ptildedef}
\end{equation}
Using \eqref{phi1kry} and \eqref{ptildedef} we then approximate
(\ref{etd1again}) by $u_{n+1}^{etd}= u_{n}^{etd} + \tilde{p}_{\Delta t}g_n^{etd}.$
The key here is that the $\varphi_1(\Delta t H_m)$ now needs to be evaluated instead of $\varphi_1(\Delta t L)$. $m$ is chosen such that $m  \ll N$, and a classical method such as a rational Pad\'e is used for $\varphi_1(\Delta t H_m)$, which would be prohibitively expensive for $\varphi_1(\Delta t L)$ for large $N$. \\
One step of the ETD1 scheme (\ref{etd1}), under the approximation $\varphi_1(\Delta t L) \approx V_m\varphi_1(\Delta t H_m)V_m^t$, becomes
\begin{equation}
\begin{split}
u_{n+1} &=  u_{n} + \Delta t V_m\varphi_1(\Delta t H)V_m^Tg_n \\
         &= ||g_n|| \Delta t V_m\varphi_1(\Delta t H)e_1,
\end{split}
\label{etd1kry}
\end{equation}
where $e_1$ is the first basis vector in $\mathbb{R}^m$.
\section{Recycling the Krylov subspace}
\label{sec:recyc}
In the Krylov subspace projection method described in
\secref{sec-kry-proj}, the subspace $\mathcal{K}_m(L,g_n)$ and thus
the matrices $H_m$ and $V_m$ depend on $g_n$. At each step it is
understood that a new subspace must be formed, and $H_m$, $V_m$ be
re-generated by the Arnoldi method, since $g_n$ changes. In
\cite{2steppap} it is demonstrated that splitting the timestep into
two substeps, and recycling $H_m$ and $V_m$, i.e. recycling the Krylov
subspace, can be viable (in that it does not decrease the local order
of the scheme, and apparently decreases the error). We expand on this
concept with a more detailed analysis of the effect of this kind of
recycled substepping applied to the locally second order ETD1 scheme
(\ref{etd1}) (EEM is considered in \ref{App:EEM}). 
We replace a single step of length $\Delta t $ of (\ref{etd1kry}) with
$S$ substeps of length $\delta t$, such that $\Delta t = S \delta
t$. We denote the approximations used in this scheme analogously to
the notation for ETD1 earlier, without the $etd$ superscript, and
introduce the following notation to keep track of substeps
$$
u_{n+\frac{j}{S}} \approx u(t_n + j \delta t)
\qquad \text{and} \qquad 
F_{n+\frac{j-1}{S}} \approx F(t_n + j \delta t, u_{n+\frac{j}{S}} ).
$$
For $j=1$ we calculate $H_m$, $V_m$, from $g_n$,
\begin{equation}
u_{n+\frac{1}{S}} =  u_{n} + \delta t V_m\varphi_1(\delta t H_m)V_m^Tg_n,
\label{recstart}
\end{equation}
and for the remaining $S-1$ steps,
\begin{equation}
u_{n+\frac{j}{S}} =  u_{n+\frac{j-1}{S}} + \delta t V_m\varphi_1(\delta t H_m)V_m^T\left(  Lu_{n+\frac{j-1}{S}} +F_{n+\frac{j-1}{S}}\right) \mbox{,     }1<j\leq S,
\label{recnext}
\end{equation}
where the matrices $H_m$ and $V_m$ \emph{are not re-calculated for any substep}, $j >1$. We call substeps of the form (\ref{recnext}) `recycled steps' and substeps of the form (\ref{recstart}) `initial steps'.  The approximation to $u(t_n  + \Delta t)$ at the end of the step of length $\Delta t $ is then given by
\begin{equation}
u_{n+1} = u_{n+\frac{S-1}{S}} +\delta t V_m\varphi_1(\delta t H_m)V_m^T\left(  Lu_{n+\frac{S-1}{S}} +F_{n+\frac{S-1}{S}}\right).
\label{recS}
\end{equation}
The recycling steps (\ref{recstart}), (\ref{recnext}) can be succinctly expressed using the definition of $\tilde{p}_{\tau}$;
\begin{equation}
u_{n+\frac{1}{S}} =  u_{n} +  \tilde{p}_{\delta t} g_n,
\label{recstartsuccinct}
\end{equation}
\begin{equation}
u_{n+\frac{j}{S}} =  u_{n+\frac{j-1}{S}} +  \tilde{p}_{\delta t}  \left(  Lu_{n+\frac{j-1}{S}} +F_{n+\frac{j-1}{S}}\right) \mbox{,     }1<j\leq S.
\label{recnextsuccinct}
\end{equation}

\subsection{The local error of the recycling scheme}
We now derive an expression for the local error of the scheme defined
by (\ref{recstart}), (\ref{recnext}) and prove that the leading term
decreases with the number of substeps $S$. We use the local error
assumption that $u_n = u(t_n)$ and make an assumption about the
accuracy of the initial Krylov approximation with respect to the error
of the scheme. Let the error in the polynomial Krylov approximation
over a single step (including the error from the approximation of
$\varphi_1(\tau H_m)$ using, e.g. Pad\'e approximation), with subspace
dimension of size $m$, be given by $\mathcal{E}_{n+1}^m$, so that, 
\begin{equation}
\tilde{p}_{\tau}g_n = p_{\tau}g_n + \mathcal{E}_{n+1}^m.
\label{kerr_eq}
\end{equation}
\begin{assumption}
The Krylov approximation error $\mathcal{E}_{n+1}^m$ is much less than the error of  ETD1, and thus does not affect the leading term of the local error of ETD1.
\label{asskryerr}
\end{assumption}
Bounds on $\mathcal{E}_{n+1}^m$ can be found in for example \cite{hoch-kry, saad92}. Practically, we can always reduce $\Delta t$ or increase $m$ until Assumption \ref{asskryerr} is satisfied. \\
For the local error of the recycling scheme, the following result will be used.
\begin{lemma}
\label{L3}
For any $\tau_1,\tau_2 \in \mathbb{R}$, and any vector $v \in \mathbb{R}^N$, 
$$
p_{\tau_1}v + p_{\tau_2}\left( Lp_{\tau_1}v + v \right) = p_{\tau_1 + \tau_2}v,
$$
and the same relation holds for the Krylov approximations, that is,
$$
\tilde{p}_{\tau_1}v + \tilde{p}_{\tau_2}\left( L\tilde{p}_{\tau_1}v + v \right) = \tilde{p}_{\tau_1 + \tau_2}v.
$$
\end{lemma}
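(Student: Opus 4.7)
The plan is to reduce both claims to the elementary scalar identity $(e^{\tau_2 z}-1)e^{\tau_1 z} + (e^{\tau_1 z}-1) = e^{(\tau_1+\tau_2)z}-1$, lifted to matrix arguments. The key starting observation is that, from the definition of $\varphi_1$, one has $Lp_\tau = e^{\tau L}-I$ as an identity in the entire functional calculus: when $L$ is invertible this is literally $p_\tau = L^{-1}(e^{\tau L}-I)$, and the relation $Lp_\tau = e^{\tau L}-I$ persists when $L$ is singular since $e^{\tau L}-I = L\cdot\tau\varphi_1(\tau L)$ as power series.

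For the first identity I would compute directly. Using $Lp_{\tau_1}v + v = (e^{\tau_1 L}-I)v + v = e^{\tau_1 L}v$ and the mutual commutation of $L$, $e^{\tau_1 L}$, $e^{\tau_2 L}$, one gets $p_{\tau_2}(Lp_{\tau_1}v + v) = p_{\tau_2} e^{\tau_1 L}v = L^{-1}(e^{(\tau_1+\tau_2)L} - e^{\tau_1 L})v$ (read through the power series if $L$ is singular). Adding $p_{\tau_1}v = L^{-1}(e^{\tau_1 L}-I)v$ causes the $e^{\tau_1 L}v$ terms to telescope, leaving $L^{-1}(e^{(\tau_1+\tau_2)L}-I)v = p_{\tau_1+\tau_2}v$.

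For the Krylov version I would run the same argument at the reduced level, writing $\tilde{p}_\tau = V_m H_m^{-1}(e^{\tau H_m}-I)V_m^T$. The main obstacle is that the $L$ appearing in $L\tilde{p}_{\tau_1}v$ is the full matrix, and the Arnoldi recursion $LV_m = V_m H_m + h_{m+1,m}v_{m+1}e_m^T$ carries a residual that has no analogue in the reduced picture. The saving grace is that this residual is killed by the $V_m^T$ factor brought in from the left by $\tilde{p}_{\tau_2}$: the exact Arnoldi identity $V_m^T L V_m = H_m$ (the same identity that drives the proof of Lemma \ref{Lkrybasic}) gives $V_m^T L \tilde{p}_{\tau_1}v = H_m H_m^{-1}(e^{\tau_1 H_m}-I)V_m^T v = (e^{\tau_1 H_m}-I)V_m^T v$. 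With this simplification the remaining algebra is an exact replica of the scalar identity performed on the projected vector $V_m^T v$, and collapses $\tilde{p}_{\tau_1}v + \tilde{p}_{\tau_2}(L\tilde{p}_{\tau_1}v + v)$ to $V_m H_m^{-1}(e^{(\tau_1+\tau_2)H_m}-I)V_m^T v = \tilde{p}_{\tau_1+\tau_2}v$. A minor technicality is the invertibility of $H_m$, which is generic and can in any case be sidestepped by treating $H_m^{-1}(e^{\tau H_m}-I)$ as the entire function $\tau\varphi_1(\tau H_m)$ throughout.
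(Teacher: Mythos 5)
Your proof is correct and follows essentially the same route as the paper: both arguments write $\tilde{p}_\tau$ as $V_mH_m^{-1}(e^{\tau H_m}-I)V_m^T$, use the Arnoldi relation $V_m^TLV_m=H_m$ to collapse the full-matrix $L$ sandwiched between projections, and then telescope the exponentials (with the unprojected identity handled the same way). Your explicit remarks on the singular cases of $L$ and $H_m$ are a small tidying-up that the paper leaves implicit.
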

\begin{proof}
We prove the second equation. The first can be proved using an almost identical argument, replacing $\tilde{p}_{\tau}$ by $p_{\tau}$ where appropriate. \\
By the definitions of $\tilde{p}_{\tau}$, and $\varphi_1$, i.e. $\tilde{p}_{\tau} = V\varphi_1(\tau H_m)V_m^T = V_mH_m^{-1}\left(e^{\tau_2 H_m}-I\right)V_m^T$ we have
$$
\tilde{p}_{\tau_2}\left( L\tilde{p}_{\tau_1}v + v \right) = V_mH_m^{-1}\left(e^{\tau_2 H_m}-I\right)V_m^T\left(  LV_mH_m^{-1}\left(e^{\tau_1 H_m}-I\right)V_m^T +I \right)v.
$$
By \eqref{HandAprime} this becomes 
$
\tilde{p}_{\tau_2}\left( L\tilde{p}_{\tau_1}v + v \right) =V_mH_m^{-1}\left(e^{ (\tau_2+\tau_1) H_m}-e^{\tau_1 H_m}\right)V_m^T v.
$
Now using the definition of $\tilde{p}_{\tau_1}$,
\begin{equation*}
\begin{split}
&\tilde{p}_{\tau_1}v + \tilde{p}_{\tau_2}\left( L\tilde{p}_{\tau_1}v + v \right) \\
&= V_mH_m^{-1}\left(e^{\tau_1 H_m}-I\right)V_m^Tv + V_mH_m^{-1}\left(e^{ (\tau_2+\tau_1) H_m}-e^{\tau_1 H_m}\right)V_m^T v \\
&=  V_mH_m^{-1}\left(e^{(\tau_2+\tau_1) H_m}-I\right)V_m^Tv,
\end{split}
\end{equation*}
which is $\tilde{p}_{\tau_1 + \tau_2}v$ as desired.
\end{proof}
Without recycling substeps, a single ETD1 step (\ref{etd1}) of length $\Delta t$, using the polynomial Krylov approximation, would be:
\begin{equation}
u_{n+1}^{etd}= u_{n}^{etd} + \tilde{p}_{\Delta t}g_n^{etd}.
\label{etd1brv}
\end{equation}
To examine the local error we compare $u_{n+1}^{etd}$ with the $u_{n+1}$ obtained after some number $S$ of recycled substeps. We can write
$$
u_{n+1}= u_{n} + \tilde{p}_{\Delta t}g_n + R_{n+1}^S,
$$
where $R_{n+1}^S$ represents the deviation from (\ref{etd1brv}) over one step. 
Then we have:
\begin{lemma}
The approximation $u_{n+\frac{j}{S}}$ produced by $j$ substeps of the recycling scheme (\ref{recstartsuccinct}), (\ref{recnextsuccinct}), satisfies
\begin{equation}
u_{n+\frac{j}{S}}=u_n +  \tilde{p}_{j\delta t}g_n + R_{n+\frac{j}{S}}^S ,
\label{herer}
\end{equation}
with 
\begin{equation}
R_{n+\frac{j}{S}}^S  = \sum_{k=1}^{j}{(I+\tilde{p}_{\delta t}L)^{j-k}\tilde{p}_{\delta t}\DkF{k-1}}.
\label{Rdef}
\end{equation}
\label{lemmaForR}
\end{lemma}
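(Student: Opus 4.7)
The natural approach is induction on $j$, using the preceding Lemma \ref{L3} to collapse the $\tilde{p}$ terms and carefully tracking the $(F_{n+\frac{k-1}{S}} - F_n)$ contributions.

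For the base case $j=1$, the recycling scheme gives $u_{n+\frac{1}{S}} = u_n + \tilde{p}_{\delta t}g_n$ directly from \eqref{recstartsuccinct}, while the right-hand side of \eqref{herer} reduces to $u_n + \tilde{p}_{\delta t}g_n$ since the single summand in $R_{n+\frac{1}{S}}^S$ is $(I+\tilde{p}_{\delta t}L)^0 \tilde{p}_{\delta t}(F_n - F_n) = 0$.

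For the inductive step, assume \eqref{herer} holds for $j$ and apply the recycled substep \eqref{recnextsuccinct} to obtain
\[
u_{n+\frac{j+1}{S}} = u_{n+\frac{j}{S}} + \tilde{p}_{\delta t}\bigl(Lu_{n+\frac{j}{S}} + F_{n+\frac{j}{S}}\bigr).
\]
I would substitute the inductive hypothesis for $u_{n+\frac{j}{S}}$, and then split $F_{n+\frac{j}{S}} = F_n + (F_{n+\frac{j}{S}} - F_n)$ so that $Lu_n + F_n$ reassembles into $g_n$. The key step — and the main place where the bookkeeping can go wrong — is to then recognise the combination $\tilde{p}_{j\delta t}g_n + \tilde{p}_{\delta t}(L\tilde{p}_{j\delta t}g_n + g_n)$ appearing in the expansion and apply Lemma \ref{L3} with $\tau_1 = j\delta t$, $\tau_2 = \delta t$, $v = g_n$ to collapse it to $\tilde{p}_{(j+1)\delta t}g_n$.

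What remains is to match the residual terms against the claimed formula for $R_{n+\frac{j+1}{S}}^S$. The residual will take the form $(I + \tilde{p}_{\delta t}L)R_{n+\frac{j}{S}}^S + \tilde{p}_{\delta t}(F_{n+\frac{j}{S}} - F_n)$; pushing the factor $(I + \tilde{p}_{\delta t}L)$ inside the sum in \eqref{Rdef} increases the exponent from $j-k$ to $j+1-k$, while the extra $\tilde{p}_{\delta t}(F_{n+\frac{j}{S}} - F_n)$ term is precisely the missing $k = j+1$ summand. This exactly yields $R_{n+\frac{j+1}{S}}^S$ as defined in \eqref{Rdef}, completing the induction. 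The main obstacle is purely symbolic — making sure the telescoping of $\tilde{p}_{j\delta t} + \tilde{p}_{\delta t}(L\tilde{p}_{j\delta t}\,\cdot + \mathrm{Id})$ is invoked on $g_n$ alone, while the $LR_{n+\frac{j}{S}}^S$ piece is folded into the recursion for the residual rather than into the $\tilde{p}$-collapsing identity.
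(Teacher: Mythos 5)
Your proposal is correct and follows essentially the same route as the paper: induction on $j$, substituting the inductive hypothesis into the recycled substep, rewriting $Lu_n + F_n$ as $g_n$ (equivalently splitting off $F_{n+\frac{j}{S}}-F_n$), collapsing $\tilde{p}_{j\delta t}g_n + \tilde{p}_{\delta t}(L\tilde{p}_{j\delta t}g_n + g_n)$ via Lemma \ref{L3}, and absorbing the leftover terms into the recursion $R_{n+\frac{j+1}{S}}^S = \tilde{p}_{\delta t}(F_{n+\frac{j}{S}}-F_n) + (I+\tilde{p}_{\delta t}L)R_{n+\frac{j}{S}}^S$. The bookkeeping you flag as the delicate point (keeping $LR_{n+\frac{j}{S}}^S$ out of the Lemma \ref{L3} collapse and folding it into the residual) is handled exactly the same way in the paper.
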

\begin{proof}
By induction. For $j=1$, $u_{n+\frac{1}{S}}$ is given by (\ref{recstartsuccinct}) and $R_{n+\frac{1}{S}}^S =0$. Equation (\ref{Rdef}) gives $R_{n+\frac{1}{S}}^S = \tilde{p}_{\delta t}\DkF{0} = 0 $ as required.\\
Assume now (\ref{herer}) holds for some $j\geq1$. Then $u_{n+\frac{j+1}{S}}$ is obtained by a step of (\ref{recnextsuccinct}).
Using (\ref{herer}) we find,
$$
u_{n+\frac{j+1}{S}}= u_{n+\frac{j}{S}} + \tilde{p}_{\delta t}  \left(  Lu_n + L\tilde{p}_{j\delta t}g_n + LR_{n+\frac{j}{S}}^S   +F_{n+\frac{j}{S}}\right),
$$
and since $Lu_n = g_n - F(t_n,u(t_n))$, by the induction hypothesis, 
\begin{equation}
\begin{split}
u_{n+\frac{j+1}{S}} &= u_{n+\frac{j}{S}} + \tilde{p}_{\delta t}  \left(  g_n + L\tilde{p}_{\delta t}g_n + LR_{n+\frac{j}{S}}^S   +F_{n+\frac{j}{S}} - F(t_n,u(t_n))\right) \\ \nonumber
                                &= u_{n+\frac{j}{S}} + \tilde{p}_{\delta t}\left( g_n + L\tilde{p}_{\delta t}g_n \right) + \tilde{p}_{\delta t}LR_{n+\frac{j}{S}}^S + \tilde{p}_{\delta t}\DkF{j}  \\ \nonumber
                                &= u_n +  \tilde{p}_{j\delta t}g_n+\tilde{p}_{\delta t}\left( g_n + L\tilde{p}_{\delta t}g_n \right) + (I+\tilde{p}_{\delta t}L)R_{n+\frac{j}{S}}^S + \tilde{p}_{\delta t}\DkF{j}.  \nonumber
\end{split}
\end{equation}
Thus by Lemma \ref{L3} we have that,
$$
u_{n+\frac{j+1}{S}}= u_n +  \tilde{p}_{(j+1)\delta t}g_n + \tilde{p}_{\delta t}\DkF{j} + (I+\tilde{p}_{\delta t}L)R_{n+\frac{j}{S}}^S .
$$
To complete the proof we need to show:
\begin{equation}
R_{n+\frac{j+1}{S}}^S = \tilde{p}_{\delta t}\DkF{j} + (I+\tilde{p}_{\delta t}L)R_{n+\frac{j}{S}}^S. 
\label{R_ind}
\end{equation}
By the induction hypothesis that (\ref{Rdef}) holds for $j$,
\begin{equation}
\begin{split}
 &\tilde{p}_{\delta t}\DkF{j} + (I+\tilde{p}_{\delta t}L)R_{n+\frac{j}{S}}^S \\
                    &= \tilde{p}_{\delta t}\DkF{j}+ (I+\tilde{p}_{\delta t}L)\sum_{k=1}^{j}{(I+\tilde{p}_{\delta t}L)^{j-k}\tilde{p}_{\delta t}\DkF{j-1}} \\
                    &= \sum_{k=1}^{j+1}{(I+\tilde{p}_{\delta t}L)^{j+1-k}\tilde{p}_{\delta t}\DkF{k-1}} = R_{n+\frac{j+1}{S}}^S.
\end{split}
\end{equation}
Hence the lemma is proved.
\end{proof}
Using (\ref{herer}) we now express the leading order term of the local error in terms of $S$. First we examine the leading order term of $R_{n+1}^S$. 
\begin{lemma}
The term $R_{n+\frac{j}{S}}^S$ in Lemma \ref{lemmaForR}, when expanded in powers of $\Delta t$, satisfies
\begin{equation}
R_{n+\frac{j}{S}}^S = \frac{j(j-1)}{2}\delta t^2 V_mV_m^T  \frac{dF(t_n, u_n)}{dt}  + O(\Delta t^3).
\label{RinLemma}
\end{equation}
\end{lemma}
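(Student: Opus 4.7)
The plan is to Taylor-expand every factor in the explicit formula
$$R_{n+\frac{j}{S}}^S = \sum_{k=1}^{j}(I+\tilde{p}_{\delta t}L)^{j-k}\tilde{p}_{\delta t}\DkF{k-1}$$
given by Lemma \ref{lemmaForR} in powers of $\delta t$, keep only the leading $O(\delta t^2)$ contribution in each summand, and then sum the resulting coefficients.

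I would start with the easy factors. From the series $\varphi_1(z) = I + z/2 + O(z^2)$ applied to $\delta t H_m$, combined with the definition $\tilde{p}_{\delta t} = \delta t V_m\varphi_1(\delta t H_m)V_m^T$, we obtain
$$\tilde{p}_{\delta t} = \delta t V_m V_m^T + O(\delta t^2), \qquad (I + \tilde{p}_{\delta t}L)^{j-k} = I + O(\delta t).$$
Next I would expand the increment $F_{n+\frac{k-1}{S}} - F_n$ to first order in $\delta t$. Under the local error assumption $u_n = u(t_n)$, a short induction based on (\ref{recstartsuccinct})--(\ref{recnextsuccinct}) gives $u_{n+\frac{k-1}{S}} - u_n = (k-1)\delta t\, g_n + O(\delta t^2)$; the Krylov projection is transparent at this order because $g_n \in \mathcal{K}_m(L,g_n)$ forces $V_m V_m^T g_n = g_n$. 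Applying the chain rule to $F(t,u(t))$ at $t_n$, and using $u'(t_n) = Lu_n + F(t_n, u_n) = g_n$, this yields
$$F_{n+\frac{k-1}{S}} - F_n = (k-1)\delta t\,\frac{dF(t_n,u_n)}{dt} + O(\delta t^2).$$

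Multiplying the three expansions, the $k$-th summand contributes $(k-1)\delta t^2 V_m V_m^T \frac{dF(t_n,u_n)}{dt} + O(\delta t^3)$. Summing over $k$ with $\sum_{k=1}^{j}(k-1) = j(j-1)/2$ will give the claimed leading term; the $O(\delta t^3)$ remainders aggregate to $O(\Delta t^3)$ because $j \leq S$ and $\delta t = \Delta t/S$. I expect the main technical obstacle to be a clean justification that the total derivative $dF/dt$ (rather than just $\partial_t F$) emerges from the chain rule: this depends crucially on the coincidence $u'(t_n) = g_n$ and on the Krylov projection acting as the identity on $g_n$ at this order, so the bookkeeping for the $u$-dependence of each $F_{n+\frac{k-1}{S}}$ has to be handled carefully through the preliminary induction on the size of $u_{n+\frac{k-1}{S}} - u_n$.
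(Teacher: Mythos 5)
Your proposal is correct and uses the same essential ingredients as the paper's proof: Lemma \ref{lemmaForR}, the expansion $\tilde{p}_{\delta t} = \delta t\, V_mV_m^T + O(\delta t^2)$, and the chain-rule identity $\DkF{k} = k\,\delta t\,\frac{dF}{dt}(t_n,u_n) + O(\delta t^2)$ obtained from $u'(t_n)=g_n$ together with $u_{n+\frac{k}{S}}-u_n = k\,\delta t\, g_n + O(\delta t^2)$. The only difference is organizational: the paper inducts on $j$ via the recursion $R_{n+\frac{j+1}{S}}^S = \tilde{p}_{\delta t}\DkF{j} + (I+\tilde{p}_{\delta t}L)R_{n+\frac{j}{S}}^S$ and collects $\frac{j(j-1)}{2}+j=\frac{j(j+1)}{2}$ at each step, whereas you sum the closed form \eqref{Rdef} directly and evaluate $\sum_{k=1}^{j}(k-1)=\frac{j(j-1)}{2}$, at the cost of the separate preliminary induction for $u_{n+\frac{k}{S}}-u_n$ that the paper obtains for free from its induction hypothesis.
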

\begin{proof}
By induction. Since $\DkF{0}=0$, then, $R_{n+\frac{1}{S}}^S = 0$, so that \eqref{RinLemma}  is true for $j=1$. Now assume the result holds for some $j$. Then we can express the term $F_{n+\frac{j}{S}}$ follows:  
\begin{equation}
\begin{split}
F(t_{n+\frac{j}{S}},u_{n+\frac{j}{S}}) &= F(t_{n+\frac{j}{S}},u_n + (j\delta t g_n + O(\delta t^2)))\\ \nonumber                               
&= F(t_{n},u_n ) + j\delta t\frac{\partial F}{\partial t}(t_{n},u_n) + j\delta t\frac{\partial F}{\partial u}(t_{n},u_n)g_n + O(\delta t^2) \\ \nonumber
&= F(t_{n},u_n ) + j\delta t\frac{d F}{d t}(t_{n},u_n) + O(\delta t^2). \nonumber
\end{split}
\end{equation}
We thus have that 
\begin{equation}
\DkF{j} = j\delta t\frac{d F}{d t}(t_{n},u_n) + O(\delta t^2).
\label{DF_leading}
\end{equation}
We then insert \eqref{DF_leading} into the inductive expression (\ref{RinLemma}) for $R_{n+\frac{j}{S}}^S$ and use the expansion $ \tilde{p}_{\delta t} =  \delta t V_mV_m^T + O(\delta t^2)$ to get
$$
R_{n+\frac{j+1}{S}}^S = \delta t V_mV_m^T j\delta t\frac{d F}{d t}(t_{n},u_n) + (I+\delta t V_mV_m^T L)R_{n+\frac{j}{S}}^S  + O(\delta t^3).
$$
Using the induction assumption \eqref{RinLemma},
$$
R_{n+\frac{j+1}{S}}^S = \delta t V_mV_m^T j\delta t\frac{d F}{d t}(t_{n},u_n) + \frac{j(j-1)}{2}\delta t^2 V_mV_m^T  \frac{dF(t_n, u_n)}{dt} + O(\Delta t ^3).
$$
Noting that $\Delta t = S \delta t$ we can write $O(\Delta t^3)$ as $ O(\delta t^3)$. Collecting terms we have, 
$$
R_{n+\frac{j+1}{S}}^S =  \left(\frac{j(j-1)}{2}+j\right)\delta t^2 V_mV_m^T  \frac{dF(t_n, u_n)}{dt} + O(\Delta t ^3). 
$$
The lemma follows since $\frac{j(j-1)}{2}+j = \frac{j(j+1)}{2}$.
\end{proof}
The leading local error term of the ETD1 scheme without substeps is well known to be $\frac{\Delta t^2}{2} \frac{dF(t)}{dt}$ (see \cite{RKEI}), so that we can finally recover the leading term from Lemma \ref{lemmaForR}.
\begin{corollary}
The leading term of the recycling scheme after $j$ steps is
\begin{equation}
u_{n+\frac{j}{S}} =u_{n} + j\delta t g_n + \frac{j^2 \delta t^2}{2}Lg_n + \frac{j(j-1)}{2}\delta t^2 V_mV_m^T  \frac{dF(t_n, u_n)}{dt} + O(\delta t^3).
\label{rectay}
\end{equation}
\end{corollary}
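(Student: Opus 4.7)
The plan is to combine the two preceding results almost mechanically: Lemma \ref{lemmaForR} already gives
$$
u_{n+\frac{j}{S}} = u_n + \tilde{p}_{j\delta t}g_n + R_{n+\frac{j}{S}}^S,
$$
and the lemma expanding $R_{n+\frac{j}{S}}^S$ identifies the leading $\delta t^2$ contribution coming from the substep deviations. So the only missing piece is a Taylor expansion of $\tilde{p}_{j\delta t}g_n$ up to $O(\delta t^3)$, after which addition yields the claimed formula.

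First I would expand $\tilde{p}_{j\delta t}$ using the series definition of $\varphi_1$, namely $\varphi_1(z) = I + z/2 + O(z^2)$, together with the definition $\tilde{p}_{\tau} = \tau V_m\varphi_1(\tau H_m)V_m^T$. This gives
$$
\tilde{p}_{j\delta t} = j\delta t\, V_m V_m^T + \tfrac{1}{2}(j\delta t)^2 V_m H_m V_m^T + O(\delta t^3).
$$
Next I would apply this operator to $g_n$, using Lemma \ref{Lkrybasic}: since $g_n \in \mathcal{K}_m(L,g_n)$ the projector $V_m V_m^T$ fixes $g_n$, and $V_m H_m V_m^T g_n = L g_n$. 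Hence
$$
\tilde{p}_{j\delta t}g_n = j\delta t\, g_n + \tfrac{j^2 \delta t^2}{2} L g_n + O(\delta t^3).
$$

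Finally I would substitute this expansion and the previous lemma's expression for $R_{n+\frac{j}{S}}^S$ into Lemma \ref{lemmaForR}, rewriting $O(\Delta t^3) = O(\delta t^3)$ since $\Delta t = S\delta t$ with $S$ fixed. Collecting terms reproduces exactly \eqref{rectay}. There is no real obstacle: the corollary is essentially a restatement of the two lemmas once the $\tilde{p}_{j\delta t}g_n$ series is written out, and the only point requiring care is invoking Lemma \ref{Lkrybasic} with $k=0,1$ to replace $V_m V_m^T g_n$ and $V_m H_m V_m^T g_n$ by their exact values $g_n$ and $Lg_n$, which is legitimate provided $m \geq 2$ (implicitly assumed throughout the recycling construction).
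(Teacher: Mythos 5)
Your proposal is correct and is exactly the argument the paper intends: the corollary is left unproved there precisely because it is the immediate combination of Lemma \ref{lemmaForR}, the expansion of $R_{n+\frac{j}{S}}^S$, and the series expansion of $\tilde{p}_{j\delta t}g_n$ with $V_mV_m^Tg_n=g_n$ and $V_mH_mV_m^Tg_n=Lg_n$ from Lemma \ref{Lkrybasic}. Your added remark that the $k=0,1$ cases of Lemma \ref{Lkrybasic} require $m\geq 2$ is a correct and worthwhile clarification, but does not change the route.
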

\begin{corollary}
The local error $u(t_n+\Delta t)-u_{n+1}$ of an ETD1 Krylov recycling scheme is second order for any number $S$ of recycled substeps.
Moreover, the local error after $j$ recycled steps is 
$$u(t_n+ j \delta t)-u_{n+\frac{j}{S}}=\frac{(j\delta t)^2}{2} \left( I - \frac{j-1}{j}V_mV_m^T  \right)\frac{df(t)}{dt} + O(\delta t^3).$$
In particular
\begin{equation}
u(t_n+\Delta t)-u_{n+1} =  \frac{\delta t^2}{2} \left(S^2- S(S-1)V_mV_m^T \right)\frac{df(t)}{dt} + O(\delta t^2),
\label{leadrecddt}
\end{equation}
or in terms of $\Delta t$
\begin{equation}
u(t_n+\Delta t)-u_{n+1} =\frac{\Delta t^2}{2} \left(I- \frac{S-1}{S}V_mV_m^T \right)\frac{df(t)}{dt} + O(\Delta t^2).
\label{leadrecdt}
\end{equation}
\label{finalTayErr}
\end{corollary}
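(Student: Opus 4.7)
The plan is to subtract the expansion \eqref{rectay} from the Taylor expansion of the exact solution $u(t_n+j\delta t)$ of \eqref{semilin}. Differentiating along the ODE gives $\frac{du}{dt}(t_n)=g_n$ and
$$
\frac{d^2u}{dt^2}(t_n)=\frac{dg}{dt}(t_n)=Lg_n+\frac{dF(t_n,u_n)}{dt},
$$
so that the Taylor expansion reads
$$
u(t_n+j\delta t)=u_n+j\delta t\,g_n+\tfrac{(j\delta t)^2}{2}Lg_n+\tfrac{(j\delta t)^2}{2}\tfrac{dF(t_n,u_n)}{dt}+O(\delta t^3).
$$

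Subtracting \eqref{rectay} term by term, the constant, $g_n$, and $Lg_n$ contributions cancel identically, the last one because the numerical coefficient $\tfrac{j^2\delta t^2}{2}$ in \eqref{rectay} is exactly $\tfrac{(j\delta t)^2}{2}$. The only surviving second-order contribution comes from the $dF/dt$ terms, giving
$$
u(t_n+j\delta t)-u_{n+\frac{j}{S}}=\tfrac{\delta t^2}{2}\bigl[j^2 I-j(j-1)V_mV_m^T\bigr]\tfrac{dF(t_n,u_n)}{dt}+O(\delta t^3),
$$
which factors as $\tfrac{(j\delta t)^2}{2}\bigl[I-\tfrac{j-1}{j}V_mV_m^T\bigr]\tfrac{dF(t_n,u_n)}{dt}$, establishing the first displayed formula. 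Setting $j=S$ immediately yields \eqref{leadrecddt}, and substituting $\delta t=\Delta t/S$ in the factored form yields \eqref{leadrecdt}.

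For the local-order claim, observe that $V_mV_m^T$ is an orthogonal projector, hence $\|I-\tfrac{S-1}{S}V_mV_m^T\|\le 1$ uniformly in $S$. Therefore the leading term is $O(\Delta t^2)$ regardless of the number of substeps, and the scheme retains local order two. The Krylov approximation error $\mathcal{E}_{n+1}^m$ introduced by the single Arnoldi call at the start of the block is absorbed into the higher-order remainder by Assumption \ref{asskryerr}.

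There is essentially no obstacle: once \eqref{rectay} is in hand, the remainder of the argument is bookkeeping. The only conceptual point worth flagging is the exact matching of the $Lg_n$ coefficients between the numerical expansion and the Taylor expansion---this is what prevents recycling from degrading the order to one, and it is also why the residual leading-order error involves the projector $V_mV_m^T$ only through the $F$-derivative piece of $dg/dt$, rather than through the full vector field.
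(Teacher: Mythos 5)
Your proposal is correct and follows essentially the same route the paper intends: the corollary is obtained by subtracting the expansion \eqref{rectay} of the numerical iterate from the Taylor expansion of the exact solution $u(t_n+j\delta t)$ along the ODE, with the constant, $g_n$ and $Lg_n$ terms cancelling and only the $dF/dt$ terms surviving. Your remainder $O(\delta t^3)$ is the correct order (the $O(\delta t^2)$ and $O(\Delta t^2)$ appearing in \eqref{leadrecddt} and \eqref{leadrecdt} are typographical slips), and your observation that $\lVert I-\tfrac{S-1}{S}V_mV_m^T\rVert_2\le 1$ uniformly in $S$ is a clean justification of the order claim.
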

It is interesting to compare \eqref{leadrecdt} with the leading term of the local error of regular ETD1, $\frac{\Delta t^2}{2}\frac{df}{dt}$. Since $V_mV_m^T$ is the orthogonal projector into $\mathcal{K}$, then we can see that the $ \frac{\Delta t^2}{2} \frac{S-1}{S} V_mV_m^T \frac{df(t)}{dt}$ part in \eqref{leadrecdt} is the projection of the ETD1 error into $\mathcal{K}$, multiplied by a factor $\frac{S-1}{S} \leq 1$. Thus, in the leading term, according to \eqref{leadrecdt}, the recycling scheme reduces the error of ETD1 by effectively eliminating the part of the error which lives in $\mathcal{K}$. In the limit $S \rightarrow \infty$, the entirety of the error in $\mathcal{K}$ will be eliminated. The effectiveness of the recycling scheme therefore depends on how much of $\frac{df(t)}{dt} $ can be found in $\mathcal{K}$. \\
Corollary \ref{finalTayErr} shows that using $S>1$ recycled substeps is advantageous over the basic ETD1 scheme, in the sense of reducing the magnitude of the leading local error term, whenever
\begin{equation}
\norm{\left(I- \frac{S-1}{S}V_mV_m^T \right)\frac{df(t)}{dt} }< \norm{\frac{df(t)}{dt}},
\label{leadIneq}
\end{equation}
where $\norm{\cdot}$ is a given vector norm. We show in Lemma
\ref{anotherlemma} that increasing $S$ will decrease the Euclidean norm $||\cdot||_2$ of the leading term of the local error. First we require a result on $V_mV_m^T$, the projector into the Krylov Supspace $\mathcal{K}$.

\begin{remark}
Let $x\neq 0$ be a vector such that $V_mV_m^Tx \neq 0$, then for $\alpha \in \mathbb{R}$
\begin{equation}
\norm{\left(I - \alpha V_mV_m^T \right)x}_2^2 = \norm{x}_2^2 + [(1-\alpha)^2 -1]\norm{V_mV_m^Tx}_2^2.
\label{oProjGen}
\end{equation}
\end{remark}

\begin{proof}
An elementary result for orthogonal projectors (see, e.g. \cite{saadbook}) is that
\begin{equation}
\norm{x}_2^2 = \norm{V_mV_m^Tx}_2^2 + \norm{(I-V_mV_m^T)x}_2^2,
\label{oProj}
\end{equation}
which follows from $V_mV_m^Tx \bot (I-V_mV_m^T)x$ (the orthogonality of $V_mV_m^Tx$ and $(I-V_mV_m^T)x$) and the definition of the Euclidean norm. Equation \eqref{oProjGen} is a generalisation of \eqref{oProj} as can be shown as follows. \\
Write $x - \alpha V_mV_m^T x = (I-V_mV_m^T)x + (1-\alpha)V_mV_m^Tx$,
and then, noting that $(I-V_mV_m^T)x \bot (1-\alpha)V_mV_m^Tx$, we see that
$$
\norm{x - \alpha V_mV_m^T x}_2^2 = \norm{(I-V_mV_m^T)x}_2^2 + (1-\alpha)^2\norm{V_mV_m^Tx}_2^2.
$$
Using (\ref{oProj}) to substitute for $\norm{(I-V_mV_m^T)x}_2^2$ yields (\ref{oProjGen}).
\end{proof}

\begin{lemma}
Assume $ \frac{df(t)}{dt} \neq 0$ and $ V_mV_m^T \frac{df(t)}{dt}
\neq 0$. Let $E_{S_1}$ be the local error using the recycling scheme
over a timestep of length $\Delta t$ with $S_1 \geq 1$ substeps, and
$E_{S_2}$ the local error with $S_2$ substeps with $S_2 > S_1$. 
Then,
$$
\norm{E_{S_2}}_2 < \norm{E_{S_1}}_2.
$$
\label{anotherlemma}
\end{lemma}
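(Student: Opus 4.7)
The plan is to apply the preceding Remark directly to the leading-order expression for the local error supplied by Corollary \ref{finalTayErr}, namely
$$
E_S = \frac{\Delta t^2}{2}\left(I - \frac{S-1}{S}V_mV_m^T\right)\frac{df(t)}{dt} + O(\Delta t^3),
$$
so the dependence of $\|E_S\|_2$ on $S$ is captured entirely by the scalar coefficient $\alpha = (S-1)/S$ appearing in front of $V_mV_m^T$.

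First I would substitute $x = \frac{df(t)}{dt}$ and $\alpha = (S-1)/S$ into identity \eqref{oProjGen} of the Remark. This yields
$$
\left\|\left(I - \tfrac{S-1}{S}V_mV_m^T\right)x\right\|_2^2
= \|x\|_2^2 + \Bigl[\tfrac{1}{S^2} - 1\Bigr]\,\|V_mV_m^Tx\|_2^2,
$$
so up to the higher-order remainder
$$
\|E_S\|_2^2 = \left(\tfrac{\Delta t^2}{2}\right)^2\!\left(\|x\|_2^2 - \Bigl(1-\tfrac{1}{S^2}\Bigr)\|V_mV_m^Tx\|_2^2\right) + O(\Delta t^5).
$$
This is the key formula: the $S$-dependence has been compressed into the single factor $1 - 1/S^2$, which is strictly increasing in $S$.

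Next I would use the two standing hypotheses. The assumption $V_mV_m^T\frac{df(t)}{dt}\neq 0$ ensures the coefficient $\|V_mV_m^Tx\|_2^2$ is strictly positive, so increasing $S$ from $S_1$ to $S_2 > S_1$ strictly decreases the bracketed quantity; the assumption $\frac{df(t)}{dt}\neq 0$ rules out the trivial case where both sides vanish. Hence at the level of leading terms, $\|E_{S_2}\|_2 < \|E_{S_1}\|_2$, and the strict inequality is inherited by the full expression for sufficiently small $\Delta t$ by taking square roots.

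The only subtlety is the $O(\Delta t^3)$ remainder in Corollary \ref{finalTayErr}: one must argue that for $\Delta t$ small enough the leading-term gap dominates the higher-order terms. Since the leading-term difference $\|E_{S_1}\|_2^2 - \|E_{S_2}\|_2^2$ is a strictly positive quantity of order $\Delta t^4$ (with a coefficient $\bigl(\tfrac{1}{S_1^2}-\tfrac{1}{S_2^2}\bigr)\|V_mV_m^Tx\|_2^2 > 0$ independent of $\Delta t$), while the correction to this gap from the higher-order remainders is $O(\Delta t^5)$, the inequality holds for all $\Delta t$ below a threshold depending on $S_1, S_2$ and the problem data. This handling of the remainder is the only nontrivial step; the rest is algebra.
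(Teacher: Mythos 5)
Your proof is correct, and it reaches the conclusion by a more direct use of the same key identity. The paper instead fixes $x \equiv \bigl(I-\beta_1 V_mV_m^T\bigr)\frac{df(t)}{dt}$ with $\beta_k = \frac{S_k-1}{S_k}$, rewrites the $S_2$ error direction as $x - \gamma V_mV_m^Tx$ with $\gamma = \frac{\beta_1-\beta_2}{\beta_1-1} = 1 - \frac{S_1}{S_2}$, verifies separately that $V_mV_m^Tx \neq 0$, and then applies \eqref{oProjGen} once to this transformed vector, reducing the claim to $\gamma \in (0,2)$. You apply \eqref{oProjGen} twice, directly to $\frac{df(t)}{dt}$ with $\alpha = \frac{S-1}{S}$, which collapses the whole comparison to the monotonicity of $1 - \frac{1}{S^2}$ in $S$; this avoids the change of variables, the auxiliary computation of $\gamma$, and the extra argument that the projection of the transformed vector is nonzero, so it is arguably cleaner (note also that your first hypothesis $\frac{df(t)}{dt}\neq 0$ is already implied by $V_mV_m^T\frac{df(t)}{dt}\neq 0$). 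You are also more careful than the paper on one point: the paper's proof silently identifies the local error with its leading term, whereas you explicitly show that the leading-order gap is a strictly positive $O(\Delta t^4)$ quantity in the squared norms while the contamination from the $O(\Delta t^3)$ remainders is $O(\Delta t^5)$, so the strict inequality survives for $\Delta t$ below a threshold. That qualifier ("for sufficiently small $\Delta t$") is genuinely needed for the statement as it concerns the full local error rather than only its leading term, so your treatment closes a small gap in the paper's own argument rather than introducing one.
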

\begin{proof}
The local errors $E_{S_k}$, $k=1,2$ are given in Corollary
\ref{finalTayErr}. Let $\frac{S_k-1}{S_k} \equiv \beta_k$, $k=1,2$. We
need to show that  
$$
\norm{\left(I-\beta_2 V_mV_m^T \right)\frac{df(t)}{dt}}_2 < \norm{\left(I-\beta_1 V_mV_m^T \right)\frac{df(t)}{dt}}_2.
$$
Let $x \equiv \left(I-\beta_1 V_mV_m^T \right)\frac{df(t)}{dt}$, then $\left(I-\beta_2 V_mV_m^T \right)\frac{df(t)}{dt} = x - \left( \frac{\beta_1 - \beta_2}{\beta_1-1} \right)V_mV_m^Tx$ (showing this involves using $V_mV_m^TV_mV_m^T = V_mV_m^T$). Letting $\gamma \equiv \frac{\beta_1 - \beta_2}{\beta_1-1} $, we then need to show
\begin{equation}
\norm{\left(I - \gamma V_mV_m^T \right)x}_2 < \norm{x}_2.
\label{blah}
\end{equation}
Note that we have that $V_mV_m^T x \neq 0$ from the assumptions. This is because, 
$$
V_mV_m^T x = \left(V_mV_m^T - \beta_1 V_mV_m^T \right) \frac{df(t)}{dt},
$$
since $ V_mV_m^T  V_mV_m^T \frac{df(t)}{dt} =  V_mV_m^T \frac{df(t)}{dt}$, as $V_mV_m^T \frac{df(t)}{dt}$ is already entirely within $\mathcal{K}$. Then,
$$
V_mV_m^T x  = (1-\beta_1) V_mV_m^T \frac{df(t)}{dt}.
$$
We have that $1-\beta_1 = \frac{1}{S_1} \neq 0$ and $ V_mV_m^T \frac{df(t)}{dt} \neq 0$, so that $V_mV_m^T x  \neq 0$. \\
To prove the lemma we apply (\ref{oProjGen}) to $x$, with $\gamma$ in place of $\alpha$. If we have that $[(1-\gamma)^2 -1] <0$, then \eqref{blah} is true since $V_mV_m^T x \neq 0$. An equivalent requirement is $\gamma \in (0,2)$. Some algebra gives us $\gamma = 1 - \frac{S_1}{S_2}$. Since  $S_2>S_1$, it follows that $\gamma \in (0,2)$.
\end{proof}
From Lemma (\ref{anotherlemma}) we see that $S$ recycled Krylov substeps not only maintains the local error order of the ETD1 scheme, but also decreases the $2$-norm of the leading term with increasing $S$. Note that the leading term does not tend towards zero as $S \rightarrow \infty$, but towards a constant. We thus expect diminishing returns in the increase in accuracy with increasing $S$, and the existence of an optimal $S$ for efficiency.

\section{Using the additional substeps for correctors}
\label{cor-sec}
We now establish a new second order scheme based on a 
finite difference approximation to the derivative of the nonlinear
term $F$ and the recycling scheme given in (\ref{recstart}), (\ref{recnext}).

The first step is to expand the local error for the standard ETD1 scheme.
Using variation of constants and a Taylor series expansion of $F(t,
u(t))$, the exact solution of (\ref{semilin}) can be expressed as a
power series (see for example \cite{Overview,RKEI})
\begin{equation}
u(t_n+\Delta t) = e^{\Delta t L}u(t_n) + \sum_{k=1}^{k}{\Delta t^k \varphi_k(\Delta t L) F^{(k-1)}(t_n, u_n)} + O(\Delta t^k),
\label{true1}
\end{equation}
with $F^{(k)}(t_n, u_n) = \frac{d^k F}{dt^k}(t_n, u_n)$. Under the local error assumption $u_n = u(t_n)$, the local error of the ETD1 step given in (\ref{etd1}) is
\begin{equation}
E_{n+1}^{etd} \equiv u(t_n+\Delta t)  - u_n^{etd} +  p_{\Delta t}g_n = \sum_{k=2}^{\infty}{\Delta t^k\varphi_k(\Delta t L)F^{(k-1)}(t_n)}.
\label{trueerror}
\end{equation}
Since the approximation from a substepping scheme is related to the approximation from the ETD1 scheme (over one step) by $u_{n+1} = u_{n+1}^{etd} + R_{n+1}^S$, we have the local error for the recycling scheme:
\begin{equation}
u(t_n+ \Delta t) - u_n  = E_{n+1}^{etd}  - \mathcal{E}_{n+1}^m - R_{n+1}^S.
\label{thing2}
\end{equation}
The terms of error expression (\ref{thing2}) at arbitrary order can be
found using \eqref{true1}, Lemma \ref{lemmaForR}, and the information
on Krylov projection methods in \secref{sec-kry-proj}. We see that the
expansion consists of terms involving the value of $F$ or derivatives
thereof at various substeps. These terms can be approximated by finite
differences of the values for $F$ at the different substeps, and used
as a corrector to eliminate terms for the error. 

We consider extrapolation in the leading error in the case of two substeps, that is $S \equiv 2$. Assume that the error from the Krylov approximation, $\mathcal{E}_n^m$ is negligible compared to $E_n$ and $R_n$, so that it does not introduce any terms at the first and second and third order expansion of $E_n$ and $R_n$. Then we can express exactly the leading second and third order error terms. \\
First we have the leading terms of $E_n^{etd}$ from (\ref{trueerror}),
\begin{equation}
\begin{split}
E_n^{etd} &= \Delta t^2 \varphi_2(\Delta t L)F^{(1)}(t_n,u_n) + \Delta t^3 \varphi_3(\Delta t L)F^{(2)}(t_n,u_n) + O(\Delta t^4) \\
    &= \frac{\Delta t^2}{2!}F^{(1)} +  \frac{\Delta t^3 L}{3!}F^{(1)} +  \frac{\Delta t^3}{3!}F^{(2)} + O(\Delta t^4).
\end{split}
\end{equation}
We also have the leading terms of $R_{n+1}^2$ (from two substeps, recall \eqref{Rdef})
\begin{equation}
\begin{split}
R_{n+1}^2 &= \tilde{p}_{\frac{\Delta t}{2}}( F_{n+\frac{1}{2}}- F_n ) \\
          &= \frac{\Delta t}{2} V_m \left( I + \frac{\Delta t H_m}{2} + \ldots \right) V_m^T  ( F_{n+\frac{1}{2}}- F_n ) \\
          &= \frac{\Delta t}{2} V_m  V_m^T  ( F_{n+\frac{1}{2}}- F_n ) + V_m\frac{\Delta t^2 H_m}{4}V_m^T  ( F_{n+\frac{1}{2}}- F_n ) + O(\Delta t^3).
\end{split}
\label{R_np1_eq}
\end{equation}
Note that the terms in \eqref{R_np1_eq} are an order higher than written since $F_{n+\frac{1}{2}}- F_n = \frac{\Delta t}{2} F^{(1)}(t_n,u_n) + O(\Delta t^2)$. We then have that
\begin{equation}
\begin{split}
u(t_n+\Delta t) &= u_{n+1}  
+ \frac{\Delta t^2}{2!}F^{(1)} - \frac{\Delta t}{2} V_m  V_m^T  ( F_{n+\frac{1}{2}}- F_n ) \\
								&+  \frac{\Delta t^3 L}{3!}F^{(1)}+ \frac{\Delta t^3}{3!}F^{(2)} -  V_m\frac{\Delta t^2 H_m}{4}V_m^T  ( F_{n+\frac{1}{2}}- F_n ) + O(\Delta t^4).
\end{split}
\label{leading}
\end{equation}
The idea now is as follows. Define a corrected approximation:
\begin{equation}
u_{n+1}^{(c)} \equiv u_{n+1} + C - \frac{\Delta t}{2} V_m  V_m^T  ( F_{n+\frac{1}{2}}- F_n ).
\label{corthis}
\end{equation}
In (\ref{corthis}), $C$ is a corrector intended to cancel out some of the leading terms in (\ref{leading}). The term $\frac{\Delta t}{2} V_m  V_m^T  ( F_{n+\frac{1}{2}}- F_n )$ is the only leading term in (\ref{leading}) to involve the matrix $V_m$, and so is added directly to the corrected approximation (\ref{corthis}) to allow $C$ to be free of dependence on the matrix $V_m$. Indeed, $C$ will be a linear combination of the the three function values of $F$, $F_n$, $F_{n+\frac{1}{2}}$ and $F_{n+1}$, available at the end of the full step. The approximation to $u$ produced by substeps of the scheme, and thus also to $F$, is locally second order. We define the $C$ term as follows, with coefficients $\alpha$, $\beta$, $\gamma$ to be chosen later.

\begin{equation}
\begin{split}
C  &\equiv \Delta t \alpha F_n + \Delta t \beta F_{n+\frac{1}{2}} + \Delta t \gamma F_{n+1} \\
    &= \Delta t \alpha F(t_n) + \Delta t \beta F\left(t_n +\frac{\Delta t}{2}\right) + \Delta t \gamma F\left(t_n + \Delta t\right) + \Delta t^3 E_c + O(\Delta t ^4) \\
    &= \Delta t \left(\alpha + \beta + \gamma \right) F + \Delta t^2 \left(\frac{\beta}{2} + \gamma \right) F^{(1)} + \frac{\Delta t ^3}{2}\left(\frac{\beta}{4} + \gamma \right) F^{(2)} + \Delta t^3E_c + O(\Delta t ^4)
\end{split}
\label{corrector1}
\end{equation}
where we have used that $F_n = F(t_n, u(t_n))$ (under the local error assumptions), $F_{n+\frac{1}{2}}= F\left(t_n +\frac{\Delta t}{2}\right) +O(\Delta t^2)$ and so on. The new term $ \Delta t^3 E_c$ is introduced to represent the $O(\Delta t^3)$ error in writing $\Delta t F_{n+\frac{1}{2}}$ as $\Delta t F\left(t_n +\frac{\Delta t}{2}\right)$, and so on. \\
From (\ref{corrector1}), we must choose the coefficients to satisfy the two conditions
$$
\alpha + \beta + \gamma = 0, \qquad \text{and } \qquad 
\frac{\beta}{2} + \gamma = \frac{1}{2}.
$$
With these values of the parameters, the local error of the corrected approximation is 
\begin{equation}
\begin{split}
& u(t_n+\Delta t) - u_{n+1}^{(c)} =   \\
								&  \frac{\Delta t^3}{3!}F^{(2)} -  V_m\frac{\Delta t^2 H_m}{4}V_m^T  ( F_{n+\frac{1}{2}}- F_n ) + \frac{\Delta t ^3}{2}\left(\frac{\beta}{4} + \gamma \right) F^{(2)} -\Delta t^3E_c + O(\Delta t^4) \\
								& =  \frac{\Delta t^3}{3!}F^{(2)} -  V_m\frac{\Delta t^2 H_m}{8}V_m^T F^{(1)} + \frac{\Delta t ^3}{2}\left(\frac{\beta}{4} + \gamma \right) F^{(2)} -\Delta t^3E_c + O(\Delta t^4) .
\end{split}
\label{leading2}
\end{equation}
We have three coefficients to determine, and two constraints. We are therefore in a position
to pick another constraint to reduce the new leading error in (\ref{leading2}). It would be helpful to know the form of the error term $E_c$, introduced by the approximation of $F$ in (\ref{corrector1}). We have: 
\begin{equation}
\begin{split}
F_{n+\frac{1}{2}} &= F \left( t_{n+\frac{1}{2}}, u\left(t_{n+\frac{1}{2}}\right) -\frac{\Delta t^2}{8}F' +O(\Delta t^3) \right) \\
                  &= F \left( t_{n+\frac{1}{2}}, u\left(t_{n+\frac{1}{2}}\right)  \right) - \frac{\Delta t^2}{8}\frac{\partial F}{\partial u}F' + O(\Delta t^3),
\end{split}       
\end{equation}
using Corollary \ref{finalTayErr}.
We also have
\begin{equation}
\begin{split}
F_{n+\frac{2}{2}} &= F \left( t_{n+1}, u\left(t_{n+1}\right) - \frac{\Delta t^2}{2} \left( I - \frac{1}{2}V_mV_m^T  \right)\frac{dF(t)}{dt} +O(\Delta t^3) \right) \\
                  &= F \left( t_{n+1}, u\left(t_{n+1}\right)  \right) - \frac{\Delta t^2}{2}\frac{\partial F}{\partial u}\left( I - \frac{1}{2}V_mV_m^T  \right)\frac{dF(t_n)}{dt} + O(\Delta t^3),
\end{split}       
\end{equation}
$E_c$ is then 
$$
-\beta \frac{1}{8}\frac{\partial F}{\partial u}\frac{dF(t_n)}{dt}- \gamma \frac{1}{2}\frac{\partial F}{\partial u}\left( I - \frac{1}{2}V_mV_m^T  \right)\frac{dF(t_n)}{dt}.
$$
Substituting into (\ref{leading2}) ,
\begin{equation}
\begin{split}
 u(t_n+\Delta t) & - u_{n+1}^{(c)}  =  \\
								& \frac{\Delta t^3}{3!}F^{(2)} -  \frac{\Delta t^2 }{4}V_m H_m V_m^T  ( F_{n+\frac{1}{2}}- F_n ) - \frac{\Delta t ^3}{2}\left(\frac{\beta}{4} + \gamma \right) F^{(2)}   \\
								& - \Delta t^3 \frac{\partial F}{\partial u} \left( \left(\frac{\beta}{8} + \frac{\gamma}{2}\right)I - \frac{\gamma}{4} V_mV_m^T \right) \frac{dF(t_n)}{dt}.
\end{split}
\label{leading3}
\end{equation}
We have the option here to use the final constraint to eliminate the coefficient of $F^{(2)}$ in the leading term:
$$
\frac{\Delta t^3}{3!} - \frac{\Delta t ^3}{2}\left(\frac{\beta}{4} + \gamma \right)   = 0.
$$
Note that $E_c$ cannot be eliminated without taking the inverse of $V_mV_m^T$, so this is not an efficient option. It can be seen that the values that satisfy the three constraints are:
$$
\alpha = -\frac{5}{6}, \hspace{2mm} \beta = \frac{2}{3}, \hspace{2mm} \gamma = \frac{1}{6}.
$$
Of course $E_c$ also depends on the values of $\alpha$, $\beta$,
$\gamma$, so the magnitude of the third order term will be affected by
the choice of these values also through $E_c$. With the choices given
above, we have the numerical scheme
\begin{equation}
\label{eq:corrector}
u_{n+1}^{(c)} =  u_{n+1} + C  - \frac{\Delta t}{2} V_m  V_m^T  ( F_{n+\frac{1}{2}}- F_n );
\end{equation}
that is,
\begin{equation}
\begin{split}
u_{n+1}^{(c)} =   u_{n+1} &\\
& - \Delta t \frac{5}{6} F_n + \Delta t \frac{2}{3} F_{n+\frac{1}{2}} + \Delta t  \frac{1}{6} F_{n+1} \\
& - \frac{\Delta t}{2} V_m  V_m^T  ( F_{n+\frac{1}{2}}- F_n ).
\end{split}
\end{equation}
and the $E_c$ term in (\ref{leading3}) becomes:
$$
- \Delta t^3 \frac{\partial F}{\partial u} \left( \frac{2}{12}I - \frac{1}{24} V_mV_m^T \right) \frac{dF(t_n)}{dt}.
$$
Here we have used all the extra information from the two substeps to
completely eliminate the lowest order from the local error, and a part
of the new leading order term for the scheme. A more thorough use of
the error expressions in the lemmas here may give rise to recycling
schemes that use more substeps and are able to completely eliminate
higher order terms from the error, leading to a kind of new
exponential Runge-Kutta framework involving recycled Krylov
subspaces. Below we demonstrate the efficacy of our two-step corrected
recycling scheme with numerical examples. In 
\ref{App:EEM} we show how to apply the analysis of the substepping method to the
locally third order exponential integrator scheme EEM.

\section{Numerical Results}
\label{sec:numres}
Here we examine the performance of the recyling scheme
(\ref{recstartsuccinct}), (\ref{recnextsuccinct}) and the corrector
scheme \eqref{eq:corrector} (for the first two examples). 
The PDEs investigated in these experiments are all
advection-diffusion-reaction equations, which are converted into
semilinear ODE systems \eqref{semilin} by spatial disretisation before
our timestepping schemes are applied, see for example, \cite{myThesis} for
more details. We use the $v(\mathbf{x}, t) \in \mathcal{R}$ to
represent the solution to the PDE, while $u(t) \in \mathcal{R}^N$
represents the solution of the corresponding ODE system. 
The spatial discretisation is a simple finite volume method in
all the examples. In examples 2 and 3, the grid was using code from
MRST \cite{mrst_prime}.
We compare the second order corrector scheme \eqref{eq:corrector} to
both the standard second order exponential integrator (ETD2; refer to for example Equation (6) in \cite{cm}) and
standard second order Rosenbrock scheme (ROS2; the same as the simplest Rosenbrock scheme described in  \cite{rosenbrock_pre, rosenbrock}). For the first two experiments, the error is estimated by comparison
with a low $\Delta t$ comparison solve $u_{\mbox{comp}}$ with
ETD2. Our ETD2 implementation uses phipm.m \cite{NW} for each
timestep; which requires the following parameters: an initial Krylov
subspace dimension $m$, and an error tolerance. For our comparison solve
runs, we used $m=30$ and $10^{-7}$ respectively for these
parameters. The first two experiments are also found in
\cite{myThesis}; see this for more details.  
For the third experiment a comparison solve was prohibitively time
consuming, so error was instead estimated by differencing successive
results.
We estimate the error in a disrecte aproximation of the $L^2(\Omega)$ norm,
where $D$ is the computational domain.

\subsection{Allen-Cahn type Reaction Diffusion}
\label{AC-test-kryrec}
We approximate the solution to the PDE,
$$
\frac{dv}{dt} =  \nabla^2 D v + v - v^3.
$$
The (1D) spatial domain for this experiment was $\Omega = [0,100]$
This was discretised into a grid of $N=100$ cells. We imposed no flow
boundary conditions, i.e., $\frac{\partial u}{\partial x} = 0$ where
$x = 0$ or $x = 100$. There was a uniform diffusivity field of $D(x) = 1.0$. 
The initial condition was $u(x,0) = \cos \left( \frac{2 \pi x}{N}
\right)$ and we solved to a final time $T=1.0$. 

In \figref{AC1-steps} a) and c) we show the estimated error against
$\Delta t$, for the recycling scheme with varying number of substeps
$S$, ($S=1,2,5,10,50,100$). Note that $S=1$ is the standard ETD1 integtrator.
The behaviour is as expected; increasing
$S$ decreases the error and the scheme is first order. The diminishing returns of increasing $S$ (see \eqref{leadrecdt}) can also be observed; for example compare the significant increase in accuracy in increasing $S$ from $1$ to $5$, with the lesser increase in accuracy in increasing $S$ from $5$ to $10$. \figref{AC1-steps} shows this more emphatically - the increase in accuracy in increasing $S$ from $10$ to $50$ is significant, but the effect of increasing $S$ from $50$ to $100$ is very small. The limiting value of the error with respect to $S$ discussed above is clearly close to being reached here. \\ 
In \figref{AC1-steps} b) and d) we plot estimated error against
cputime to demonstrate the efficiency of the scheme with varying
$S$. In this case increasing $S$ appears to increase efficiency until
an optimal $S$ is reached, after which it decreases, as predicted. 
\figref{AC1-steps} d) shows that the optimal $S$ lies between $50$ and $100$ for this system. \\
In \figref{AC1-O2} we examine the 2-step corrector
\eqref{eq:corrector}.
Plot a) shows estimated error against $\Delta t$. The corrector scheme
is second order as intended, and has quite high accuracy compared to
the other two schemes, possibly due to the heuristic attempt to
decrease the error in the leading term (see discussion in
\secref{cor-sec}). In plot b) we see that the 2-step corrector is of
comparable efficiency to ROS2. 

In \figref{AC1-steps} b) we see that for the same cputime, increasing $S$ from $1$ to $10$ decreases the estimated error by roughly an order of magnitude. We can see in \figref{AC1-steps} d) that increasing $S$ from $10$ to $50$ can further decrease error for a fixed cputime, though less significantly. Comparing a fixed cputime in \figref{AC1-steps} b) and \figref{AC1-O2} b) indicates that the second order, 2-step corrector method can produce error more than one order of magnitude smaller than the first order recycling scheme with $S=10$. 

\subsection{Fracture system with Langmuir-type reaction}
\label{fracreac_kryrec}
We approximate the solution to the PDE,
\begin{equation}
\frac{dv}{dt} =  \nabla \cdot (  \nabla D v + V v  ) - \frac{0.02}{D(\mathbf{x})^2} \frac{v}{1+v}.
\label{lang_type_adr}
\end{equation}
where $D(\mathbf{x})$ is the diffusivity and $V(\mathbf{x})$ is the
velocity. In this example a single layer of cells is used, making the
problem effectively two dimensional. The domain is $\Omega = 10 \times
10 \times 10$ metres, divided into $100 \times 100 \times 1$ cells of
equal size. We impose no-flow boundary conditions on every edge. The initial condition imposed is initial $v(\mathbf{x})=0$ everywhere except at $\mathbf{x} = (4.95, 9.95)^T$ where $v(\mathbf{x})=1$.
The diffusivity $D$ in the grid varies with $\mathbf{x}$, in a way
intended to model a fracture in the medium. A subset of the cells in
the 2D grid were chosen to be cells in the fracture. These cells were
chosen by a weighted random walk through the grid (weighted to favour
moving in the positive $y$-direction so that the fracture would bisect
the domain). This process started on an initial cell which was marked
as being in the fracture, then randomly chose a neighbour of the cell
and repeated the process. We set the diffusivity to be $D = 100$ on
the fracture and $D=0.1$ elsewhere. There is also a constant velocity
$V$ field in the system, uniformly one in the x-direction and zero in
the other directions in the domain, i.e., $\mathbf{v}(\mathbf{x}) =
(1,0,0)^T$ , to the right in \figref{frac-lang-show}. The initial
condition was $c(\mathbf{x})=0$ everywhere except at $\mathbf{x} =
(4.95, 9.95)^T$ where $c(\mathbf{x})=1$. 

In \figref{frac-lang-show} we show the final state of the system at time $T=2.4$. The result in plot a) was produced with the 2-step recycling scheme with a timestep $\Delta t = 2.4 \times 10^{-4}$. Plot b) shows the high accuracy comparison ETD2 solve, produced with $\Delta t = 2.4 \times 10^{-5}$. \\
In \figref{frac-lang-steps} we demonstrate the effect of increasing
the number of substeps $S$  on the error. \figref{frac-lang-steps} a)
shows estimated error against timestep $\Delta t$, for schemes using
$S=1,2,5,10$ substeps, while \figref{frac-lang-steps} c) shows the
same for schemes using $S=10, 50,100$. Recall that $S=1$ is the
standard ETD1 integtrator. 

For sufficiently low $\Delta t$ we have the predicted results, with the error being first order with respect to $\Delta t$, and decreasing as $S$ increases. For $\Delta t$ too large, this is not the case. Here the Krylov subspace dimension $m$ is most likely the limiting factor as Assumption \ref{asskryerr} becomes invalid. In \figref{frac-lang-steps} b) and d) we show the efficiency by plotting the estimated error against cputime. For $\Delta t$ low enough that the substepping schemes are effective, the scheme with $10$ substeps is the most efficient.\\
We can see the existence of an optimal $S$ for efficiency, as predicted, in \figref{frac-lang-steps} d), where the scheme using $S=50$ is more efficient than the scheme using $S=100$. Any increase in accuracy by increasing $S$ from $50$ to $100$ is extremely small (indeed, it is unnoticeable in \figref{frac-lang-steps} c), and not enough to offset the increase in cputime. In fact, \figref{frac-lang-steps} d) shows that for this experiment the scheme using $S=10$ is more efficient than both the $S=50$ and $S=100$ schemes. \figref{frac-lang-steps} c) shows that the $S=10$ scheme is also slightly more accurate than both. This is likely because at $S=10$ the improvement in accuracy is already close to the limiting value, and greatly increasing $S$ to $50$ or $100$ only accumulates rounding errors without further benefit. \figref{frac-lang-steps} a) shows that the improvement from $S=1$ to $S=10$ is quite significant on its own.  

In \figref{frac-lang-cor} we compare the 2-step corrector scheme
against the two other second order exponential integrators, ETD2 and ROS2. \figref{frac-lang-cor} a) shows estimated error against $\Delta t$, and we see that, like \figref{frac-lang-steps} a), the Krylov recycling scheme does not function as intended above a certain $\Delta t$ threshold; again this is due to the timestep being too large with respect to $m$. The standard exponential integrators do not have this problem, as their timesteps are driven by phipm.m, which takes extra (non-recycled, linear) substeps to achieve a desired error. Below the $\Delta t$ threshold, the 2-step corrector scheme functions exactly as intended, exhibiting second order convergence and high accuracy. In \figref{frac-lang-cor} b) we can observe that the 2-step corrector scheme is more efficient than the other two schemes for lower $\Delta t$, and of comparable efficiency for larger $\Delta t$. \\
It is interesting to compare \figref{frac-lang-steps} a) and \figref{frac-lang-cor} a) and note that the threshold $\Delta t$ for the corrector scheme seems to be lower than for the substepping schemes. \\
In \figref{frac-lang-steps} b) we can again see that for a fixed cputime, increasing $S$ from $1$ to $10$ decreases error by roughly one order of magnitude; however \figref{frac-lang-steps} d) shows no improvement in increasing $S$ from $10$ to $50$. Comparing \figref{frac-lang-steps} b) and \figref{frac-lang-cor} b) shows that the second order corrector scheme can be almost three orders of magnitude more accurate for a fixed cputime than the first order recycling scheme with $S=10$.

\subsection{Large 2D Example with Random Fields}
\label{large}
In this example the 2D grid models a domain with physical dimensions
$100 \times 100 \times 10$; the grid is split into a $1000 \times 1000
\times 1$ cells. The model equation is the same as the previous
example \eqref{lang_type_adr}. The diffusivity is kept constant at
$D=0.01$, while a random velocity $V$ field is used. For this we
generated a random permativity field $K$, which was then used to
generate a corresponding pressure field and then a velocity field in a
standard way, using Darcy's Law, see \cite{myThesis,mrst_prime}. The pressure $p$ field was determined by the permativity field and the Dirichlet boundary conditions $p=1$ where $y=0$ and $p=0$ where $y=100$. 
The initial conditions for $v$ were zero everywhere, and the boundary
conditions were the same as for $p$,  $v=1$ where $y=0$ and $v=0$
where $y=100$. The final time was $T=500$. 

Due to the large size of system ($10^6$ unknowns) we only examine the
recycling scheme and for a system of this size, it was necessary to
increase $m$ to $100$ to prevent the Krylov error from being
dominant. The results are shown in Figure \ref{giant-results}. We see
that, for $\Delta t$ sufficiently low, increasing $S$ decreases the
error and increases the efficiency of the scheme. The improvement in
efficiency between $S=5$ and $S=10$ is marginal; the optimal $S$ for
this example would not be much greater than $10$. 

\section{Conclusions}
We have extended the notion of recycling a Krylov subspace for
increased accuracy in the sense of \cite{2steppap}. We have applied
this new method to the first order ETD1 scheme and examined the effect of
taking an arbitrary number $S$ of substeps. The local error has been
expressed in terms of $S$, and the expression shows that the local
error will decrease with $S$ down to a finite limit. 
The discussion in \ref{App:EEM} examines construction for EEM.
Results suggest that there maybe an optimal $S$ for a maximal
efficiency increase and 
some preliminary analysis in this direction may be fuond in \cite{myThesis}.
Convergence and existence of an optimal $S>1$ has been demonstrated
with numerical experiments.  
Additional information from the substeps was used to form a corrector
and a second order scheme. This was shown to be comparable to, or slightly
better than, ETD2 and ROS2 in our tests. 

The schemes currently rely on Assumption \ref{asskryerr}, essentially requiring that $\Delta t$ be sufficiently small and $m$ be sufficiently large, to be effective. Numerical experiments have shown how having $\Delta t$ too large can cause the schemes to become inaccurate as the error of the initial Krylov approximation becomes significant. It is already well established how the Krylov approximation error can be controlled by adapting $m$ and the use of non-recycling substeps. Applying these techniques to the schemes presented here in future work would allow them to be effective over wider $\Delta t$ ranges. 

\section{Acknowledgments}
The authors are grateful to Prof. S. Geiger for his input into the
flow simulations. The work of the Dr D. Stone was funded by the
SFC/EPSRC(EP/G036136/1) as part of NAIS.

\bibliographystyle{unsrt}
\bibliography{Bibliography_initials}

\begin{thebibliography}{10}

\bibitem{cm}
S.~M. Cox and P.~C. Matthews.
\newblock Exponential time differencing for stiff systems.
\newblock {\em Journal of Computational Physics}, 176:430--455, 2002.

\bibitem{LEM}
M.~Caliari, M.~Vianello, and L.~Bergamaschi.
\newblock The {LEM} exponential integrator for advection-diffusion-reaction
  equations.
\newblock {\em J. Comput. Appl. Math.}, 210(1-2):56--63, 2007.

\bibitem{rosenbrock_pre}
M.~Hochbruck, Alexander O., and J.~Schweitzer.
\newblock Exponential {R}osenbrock-type methods.
\newblock {\em SIAM J. Numer. Anal.}, 47(1):786--803, 2008/09.

\bibitem{rosenbrock}
M.~Caliari and A.~Ostermann.
\newblock Implementation of exponential {R}osenbrock-type integrators.
\newblock {\em Appl. Numer. Math.}, 59(3-4):568--581, 2009.

\bibitem{Overview}
M.~Hochbruck and A~Osterman.
\newblock Exponential integrators.
\newblock {\em Acta Numerica}, pages 209--286, 2010.

\bibitem{wright-minchev-overview}
B.~Minchev and W.~Wright.
\newblock A review of exponential integrators for first order semilinear
  problems.

\bibitem{KT4th}
A.~K. Kassam and L.~N. Trefethen.
\newblock Fourth-order time-stepping for stiff {PDE}s.
\newblock {\em SIAM J. Sci. Comput.}, 26(4):1214--1233 (electronic), 2005.

\bibitem{19-ways}
C.~Moler and C.~Van~Loan.
\newblock Nineteen dubious ways to compute the exponential of a matrix.
\newblock {\em SIAM Rev.}, 20(4):801--836, 1978.

\bibitem{Higham-scal}
A.~H. Al-Mohy and N.~J. Higham.
\newblock Computing the action of the matrix exponential, with an application
  to exponential integrators.
\newblock {\em SIAM J. Sci. Comput.}, 33(2):488--511, 2011.

\bibitem{moret_faber}
I.~Moret and P.~Novati.
\newblock An interpolatory approximation of the matrix exponential based on
  {F}aber polynomials.
\newblock {\em J. Comput. Appl. Math.}, 131(1-2):361--380, 2001.

\bibitem{fast-leja}
J.~Baglama, D.~Calvetti, and L.~Reichel.
\newblock Fast {L}eja points.
\newblock {\em Electron. Trans. Numer. Anal.}, 7:124--140, 1998.
\newblock Large scale eigenvalue problems (Argonne, IL, 1997).

\bibitem{relpm}
L.~Bergamaschi, M.~Caliari, and M.~Vianello.
\newblock The {R}e{LPM} exponential integrator for {FE} discretizations of
  advection-diffusion equations.
\newblock In {\em Computational science---{ICCS} 2004. {P}art {IV}}, volume
  3039 of {\em Lecture Notes in Comput. Sci.}, pages 434--442. Springer,
  Berlin, 2004.

\bibitem{mar-par}
A.~Mart{\'{\i}}nez, L.~Bergamaschi, M.~Caliari, and M.~Vianello.
\newblock A massively parallel exponential integrator for advection-diffusion
  models.
\newblock {\em J. Comput. Appl. Math.}, 231(1):82--91, 2009.

\bibitem{lej-kry-comp}
L.~Bergamaschi, M.~Caliari, A.~Martinez, and M.~Vianello.
\newblock Comparing {L}eja and {K}rylov approximations of large scale matrix
  exponentials.
\newblock In {\em Computational Science--ICCS 2006}, pages 685--692. Springer,
  2006.

\bibitem{Lejaref}
M.~Caliari, M.~Vianello, and L.~Bergamaschi.
\newblock Interpolating discrete advection-diffusion propagators at {L}eja
  sequences.
\newblock {\em J. Comput. Appl. Math.}, 172(1):79--99, 2004.

\bibitem{saad92}
Y.~Saad.
\newblock Analysis of some {K}rylov subspace approximations to the matrix
  exponential operator.
\newblock {\em SIAM J. Numer. Anal.}, 29(1):209--228, 1992.

\bibitem{hoch-kry}
M.~Hochbruck and C.~Lubich.
\newblock On {K}rylov subspace approximations to the matrix exponential
  operator.
\newblock {\em SIAM J. Numer. Anal.}, 34(5):1911--1925, 1997.

\bibitem{phivpaper}
R.~B. Sidje.
\newblock Expokit: a software package for computing matrix exponentials.
\newblock {\em ACM Transactions on Mathematical Software (TOMS)},
  24(1):130--156, 1998.

\bibitem{NW}
J.~Niesen and W.~Wright.
\newblock A {K}rylov subspace algorithm for evaluating the \(\phi\)-functions
  in exponential integrators.
\newblock {\em arXiv:0907.4631v1}, 2009.

\bibitem{Tok2010kry}
M.~Tokman and J.~Loffeld.
\newblock Efficient design of exponential-{K}rylov integrators for large scale
  computing.
\newblock {\em Procedia Computer Science}, 1(1):229--237, 2010.

\bibitem{myThesis}
D.~Stone.
\newblock {\em Asynchronous and exponential based numerical schemes for porous
  media flow}.
\newblock PhD thesis, Heriot-Watt, 2015.

\bibitem{2steppap}
E.~Carr, T.~Moroney, and I.~Turner.
\newblock Efficient simulation of unsaturated flow using exponential time
  integration.
\newblock {\em Applied Mathematics and Computation}, 217:6587--6596, 2011.

\bibitem{RKEI}
M.~Hochbruck and A.~Ostermann.
\newblock Explicit exponential {R}unge-{K}utta methods for semilinear parabolic
  problems.
\newblock {\em SIAM J. Numer. Anal.}, 43(3):1069--1090 (electronic), 2006.

\bibitem{saadbook}
Y~Saad.
\newblock {\em Iterative Methods for Sparse Linear Systems}.
\newblock ITP, 1996.

\bibitem{mrst_prime}
K.~A. Lie, S.~Krogstad, I.~S. Ligaarden, H.~M. Natvig, J. R.and~Nilsen, and
  B.~Skaflestad.
\newblock Open-source matlab implementation of consistent discretisations on
  complex grids.
\newblock {\em Computational Geosciences}, 16(2):297--322, 2012.

\end{thebibliography}

\appendix
\section{Substepping with the scheme EEM}
\label{App:EEM}
We now show how to apply the analysis of the substepping method to the
locally third order exponential integrator scheme EEM. Applied to the
system of ODEs 
$$ \frac{du}{dt} = g(u),$$
where $g(u)$ may not be semilinear, the scheme EEM is given by
\begin{equation}
u_{n+1} = u_n + \Delta t \phi_1(\Delta t J_n),
\label{eq:EEM}
\end{equation}
where $J$ denotes the Jacobian of $g$ and $J_n = J(u_n)$. 
The Jacobian $J_n$ is kept fixed for the entire step $\Delta t$, including
recycling substeps. Therefore an $S$ step recycling scheme can be
defined on EEM in exactly the same way as the 
recycling scheme for ETD1. Note that the Krylov subspace will be
generated for $J$ and $g$ in the EEM case, i.e. $\mathcal{K} =
\mathcal{K}(J_n, g_n)$. 

With $\tilde{p}_{\tau} \equiv \tau V_m \varphi_1(\tau H_m) V_m^T$
approximating $\tau \varphi_1(\tau J)$
Applying the Krylov subspace recycling scheme to EEM we have 
$$
u_{n+\frac{j}{S}}=u_n +  \tilde{p}_{j\delta t}g_n + R_{n+\frac{j}{S}}^S.
$$
Following the same steps as in Lemma \ref{lemmaForR} we obtain the
following result.
\begin{corollary}
The remainder $R_{n+\frac{j}{S}}^S$ satisfies the recursion relation
\begin{equation}
R_{n+\frac{j+1}{S}}^S = \tilde{p}_{\delta t}\DkF{j} + (I+\tilde{p}_{\delta t}L)R_{n+\frac{j}{S}}^S,
\label{R_ind}
\end{equation}
where $F_{n+\frac{j+1}{S}} = g_{n+\frac{j+1}{S}} - J_n u_{n+\frac{j+1}{S}}$.
\end{corollary}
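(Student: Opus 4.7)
The plan is to reduce the EEM case to the ETD1 analysis of Lemma \ref{lemmaForR} by reformulating the ODE $\frac{du}{dt} = g(u)$ as a semilinear system with frozen linear part. Writing $g(u) = J_n u + F(u)$ with $F(u) \equiv g(u) - J_n u$, a single step of EEM becomes formally identical to one step of ETD1 for this splitting, with $L$ replaced by $J_n$. Crucially, $J_n$ is held fixed across all $S$ recycled substeps, so the Krylov subspace $\mathcal{K}(J_n, g_n)$, together with the associated matrices $H_m$, $V_m$ and the operator $\tilde{p}_{\tau}$, plays exactly the same role here as in \secref{sec:recyc}. Note that the $L$ appearing inside $(I + \tilde{p}_{\delta t} L)$ in the statement should be read as $J_n$ under this identification.

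With this identification in place, I would mirror the induction proof of Lemma \ref{lemmaForR}. For the base case $j=0$ we have $u_{n+\frac{1}{S}} = u_n + \tilde{p}_{\delta t} g_n$, so $R_{n+\frac{1}{S}}^S = 0$; the right-hand side of the claimed recursion with $j=0$ also vanishes, since $\DkF{0} = F_n - F_n = 0$ and $R_{n}^S = 0$. For the inductive step, substitute the recycled EEM update
$$u_{n+\frac{j+1}{S}} = u_{n+\frac{j}{S}} + \tilde{p}_{\delta t}\bigl(J_n u_{n+\frac{j}{S}} + F_{n+\frac{j}{S}}\bigr),$$
expand $u_{n+\frac{j}{S}} = u_n + \tilde{p}_{j\delta t} g_n + R_{n+\frac{j}{S}}^S$ using the induction hypothesis, and rewrite $J_n u_n + F_n = g_n$. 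Then apply Lemma \ref{L3} (with $L$ replaced by $J_n$ and $\tau_1 = j\delta t$, $\tau_2 = \delta t$) to collapse the terms $\tilde{p}_{j\delta t}g_n + \tilde{p}_{\delta t}\bigl(g_n + J_n \tilde{p}_{j\delta t} g_n\bigr)$ into $\tilde{p}_{(j+1)\delta t}g_n$. What remains is precisely $\tilde{p}_{\delta t}\DkF{j} + (I + \tilde{p}_{\delta t} J_n) R_{n+\frac{j}{S}}^S$, yielding the desired recursion.

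The main obstacle, though essentially bookkeeping, is justifying that Lemma \ref{L3} transfers to the EEM setting with $J_n$ in place of $L$. This requires the Arnoldi identity \eqref{HandAprime} for the pair $(J_n, g_n)$, which is automatic from the definition of the Krylov subspace used to build $H_m$ and $V_m$ in the EEM step. Once that observation is made, the entire substepping argument of Section \ref{sec:recyc} translates line-by-line, with $L \mapsto J_n$ and $F(t,u) \mapsto g(u) - J_n u$, and the corollary follows immediately from the recursion established in the inductive step.
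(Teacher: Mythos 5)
Your proof is correct and is exactly the argument the paper intends: the paper simply asserts that the corollary follows ``by the same steps as in Lemma \ref{lemmaForR}'', and your line-by-line translation with $L \mapsto J_n$ and $F(u) = g(u) - J_n u$, reusing Lemma \ref{L3} for the pair $(J_n, g_n)$, is precisely that argument. Your remark that the $L$ appearing in the displayed recursion must be read as $J_n$ is also correct and fixes a genuine notational slip in the paper's statement.
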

To examine the remainder term in more detail let $\hat{J}_i$ be the Hessian matrix 
$$
\hat{J}_i = \left(
\begin{array}{ccc}
(\hat{g}_i)_{x_1 x_1} & (\hat{g}_i)_{x_1 x_2} & \ldots \\
(\hat{g}_i)_{x_2 x_1} & (\hat{g}_i)_{x_2 x_2} & \ldots \\
\ldots & \ldots & \ldots 
\end{array}
\right),
$$
where $\hat{g}_i$ is the $i$th entry of the vector $g$.
Let the tensor $\hat{\mathbf{J}}$ be a vector with the matrix
$\hat{J}_i$ in its $i$th entry. 
We can now Taylor expand the remainder $ R_{n+\frac{j}{S}}^S $ from
\eqref{R_ind} to find the local error of the EEM scheme with recycled substeps. 
\begin{lemma}
For the EEM recycling scheme, the leading term of $ R_{n+\frac{j}{S}}^S $ satisfies
$$
R_{n+\frac{j}{S}}^S = \alpha(j) \delta t^3 V_mV_m^T g_n^T \hat{\mathbf{J}} g_n + O(\delta t^4)
$$
where $\alpha (j)=(2j^3 - 3j^2 + j)/24$.
\end{lemma}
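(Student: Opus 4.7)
The plan is induction on $j$, mirroring Lemma \ref{lemmaForR} but exploiting the extra cancellation that is characteristic of EEM. The base case $j=1$ is immediate: the first substep is an initial step, so $R_{n+\frac{1}{S}}^S = 0$, matching $\alpha(1)=(2-3+1)/24=0$. For the inductive step I would apply the recursion supplied by the preceding corollary, namely
$$R_{n+\frac{j+1}{S}}^S = \tilde{p}_{\delta t}\bigl(F_{n+\frac{j}{S}} - F_n\bigr) + (I + \tilde{p}_{\delta t} J_n)\, R_{n+\frac{j}{S}}^S,$$
and Taylor expand each piece to isolate its $\delta t^3$ contribution.

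The decisive observation, and the reason EEM is one order more accurate than ETD1, is that $F(u) \equiv g(u) - J_n u$ has vanishing linear part at $u_n$. A second-order Taylor expansion of $g$ gives
$$g(u_n + \Delta u) - g(u_n) - J_n \Delta u = \tfrac{1}{2}\Delta u^T \hat{\mathbf{J}}\, \Delta u + O(\|\Delta u\|^3),$$
so $F_{n+\frac{j}{S}} - F_n = \tfrac{1}{2}\bigl(u_{n+\frac{j}{S}} - u_n\bigr)^T \hat{\mathbf{J}}\, \bigl(u_{n+\frac{j}{S}} - u_n\bigr) + O(\delta t^3)$. The leading expansion $u_{n+\frac{j}{S}} - u_n = j\delta t\, g_n + O(\delta t^2)$ follows from $\tilde{p}_{j\delta t} g_n = j\delta t\, g_n + O(\delta t^2)$ together with the induction hypothesis that $R_{n+\frac{j}{S}}^S = O(\delta t^3)$, and substituting gives $F_{n+\frac{j}{S}} - F_n$ as $j^2 \delta t^2$ times a universal multiple of $g_n^T \hat{\mathbf{J}} g_n$. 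Applying $\tilde{p}_{\delta t}$, whose leading action is $\delta t\, V_m V_m^T$, then yields a $\delta t^3$ contribution proportional to $V_m V_m^T g_n^T \hat{\mathbf{J}} g_n$. The carry term $(I + \tilde{p}_{\delta t} J_n) R_{n+\frac{j}{S}}^S$ reduces to $R_{n+\frac{j}{S}}^S + O(\delta t^4)$ since $\tilde{p}_{\delta t} J_n$ is $O(\delta t)$ and $R_{n+\frac{j}{S}}^S$ is already $O(\delta t^3)$, so no additional $V_m V_m^T$ is picked up.

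Combining the two contributions reduces the inductive step to a scalar recursion of the form $\alpha(j+1) = \alpha(j) + c\, j^2$ with $\alpha(1) = 0$, for a constant $c$ determined by the Taylor coefficient of $g_n^T \hat{\mathbf{J}} g_n$. Telescoping gives $\alpha(j) = c\sum_{k=1}^{j-1} k^2 = c\,(j-1)j(2j-1)/6$, which is the claimed cubic $(2j^3 - 3j^2 + j)/24$ once $c$ is identified. The main obstacle is Taylor-order bookkeeping: one must verify that the higher-order terms in the expansion of $\tilde{p}_{\delta t}$, together with the $O(\delta t^2)$ corrections to $u_{n+\frac{j}{S}} - u_n$, only contribute at $O(\delta t^4)$, so that exactly one factor of $V_m V_m^T$ appears in the leading term and a single scalar coefficient cleanly isolates at each step. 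The parallel with the ETD1 analysis of Lemma \ref{lemmaForR} makes the structural pattern familiar; only the Hessian now replaces the first $t$-derivative of $F$, and the leading order is raised by one.
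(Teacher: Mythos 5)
Your proposal is correct and follows essentially the same route as the paper's proof: induction via the recursion for $R_{n+\frac{j}{S}}^S$, the cancellation of the linear part of $F = g - J_n u$ leaving the Hessian term $\tfrac{(j\delta t)^2}{2}\, g_n^T \hat{\mathbf{J}}\, g_n$, the leading action $\tilde{p}_{\delta t} = \delta t\, V_m V_m^T + O(\delta t^2)$, and the telescoped scalar recursion $\alpha(j+1) = \alpha(j) + \tfrac{1}{2}j^2$. One small point: with the Taylor coefficient $c = \tfrac{1}{2}$ your telescoping gives $\alpha(j) = (2j^3 - 3j^2 + j)/12$, so the stated denominator $24$ is off by a factor of two --- a discrepancy already present in the paper itself, whose own recursion $\alpha(j+1) = \tfrac{j^2}{2} + \alpha(j)$, $\alpha(1) = 0$ is likewise not satisfied by $(2j^3 - 3j^2 + j)/24$.
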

\begin{proof}
By induction. The base case is true for $j=1$  with $\alpha(1) = 0$ since there is no recycling at that step. Assume true for some $j$. Consider $g_{n+\frac{j}{S}}$,
$$
g_{n+\frac{j}{S}} = g(u_{n+\frac{j}{S}}) = g(u(t) + j\delta t g(t) + \frac{1}{2}(j \delta t)^2 J g + O(\delta t^3) ),
$$
to second order this is
$$
g_{n+\frac{j}{S}} = g_n+ J \left(   j\delta t g_n + \frac{1}{2}(j \delta t)^2 J g_n \right) + \frac{(j\delta t)^2}{2} g_n^T \hat{\mathbf{J}}  g_n+ O(\delta t^3),
$$
where we have made use of the local error assumption $u(t_n) = u_n$.
Then,
$$F_{n+\frac{j}{S}}-F_n = g_{n+\frac{j}{S}} - g_n - J
u_{n+\frac{j}{S}} + J u_n $$
and since $u_{n+\frac{j}{S}} = u_n  j\delta t g_n + \frac{(j \delta
  t)^2}{2} J g_n+ O(\delta t^3) $ up to second order and the induction
hypothesis we have 
$$
F_{n+\frac{j}{S}}-F_n = \frac{(j\delta t)^2}{2} g_n^T \hat{\mathbf{J}}  g_n + O(\delta t^3).
$$
Now consider
$$
\tilde{p}_{\delta t}\left( F_{n+\frac{j}{S}}-F_n  \right) = \frac{j^2(\delta t)^3}{2} V_mV_m^Tg_n^T \hat{\mathbf{J}}  g_n+  O(\delta t^4) 
$$
The induction relation for $R_{n+\frac{j+1}{S}}^S $ then gives us
$$
R_{n+\frac{j+1}{S}}^S =  \frac{j^2(\delta t)^3}{2} V_mV_m^T g_n^T \hat{\mathbf{J}}  g_n + (I+\tilde{p}_{\delta t}J)R_{n+\frac{j}{S}}^S +  O(\delta t^4) 
$$
which to leading order this is
$$
R_{n+\frac{j+1}{S}}^S = \left( \frac{j^2}{2} +\alpha(j) \right) \delta t^3 V_mV_m^T  g_n^T \hat{\mathbf{J}} g_n +  O(\delta t^4).
$$
So $\alpha(j+1) = \frac{j^2}{2} + \alpha(j)$, $\alpha(1) = 0.$
which is satisfied by the given $\alpha(j)$. 
\end{proof}
We now combine the leading term of the remainder $R$ and the known
local error of EEM 
$$
\frac{1}{6} \Delta t^3 g^T \hat{\mathbf{J}} g.
$$
(see, for example, \cite{myThesis}) to find the local error of the new recycling scheme.
\begin{corollary}
The leading term of the local error of the $S$ step recycling scheme for EEM at the end of a timestep is
$$
\frac{\Delta t^3}{6} \left(  I    - \left( \frac{2S^2 - 3S + 1}{2 S^2} \right)  V_mV_m^T  \right)  g^T \hat{\mathbf{J}}  g.
$$
\end{corollary}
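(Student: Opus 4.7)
The plan is to assemble the leading local error by combining two already-available ingredients: the known leading local error of pure EEM over a step of length $\Delta t$, and the leading order of the recycling remainder $R_{n+1}^S$ at $j=S$ from the preceding lemma. Under the local error assumption $u_n = u(t_n)$, and with Assumption \ref{asskryerr} removing the Krylov approximation error from the orders we track, the construction of the recycling scheme gives $u_{n+1} = u_{n+1}^{\text{EEM}} + R_{n+1}^{S}$ (the EEM analogue of the ETD1 identity used above). So I would first write
$$
u(t_n+\Delta t) - u_{n+1} \;=\; \bigl(u(t_n+\Delta t) - u_{n+1}^{\text{EEM}}\bigr) - R_{n+1}^{S}.
$$

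Next, I would substitute the cited EEM leading local error $\tfrac{1}{6}\Delta t^{3}\, g^{T}\hat{\mathbf{J}}\,g$ for the first bracket, and, using the preceding lemma at $j=S$, the expression $\alpha(S)\,\delta t^{3}\,V_mV_m^{T}\,g^{T}\hat{\mathbf{J}}\,g + O(\delta t^{4})$ for $R_{n+1}^{S}$, with $\alpha(S)=(2S^{3}-3S^{2}+S)/24$. The rest is elementary algebra: use $\delta t = \Delta t/S$ to rewrite
$$
\alpha(S)\,\delta t^{3} \;=\; \frac{2S^{3}-3S^{2}+S}{24\,S^{3}}\,\Delta t^{3} \;=\; \frac{2S^{2}-3S+1}{24\,S^{2}}\,\Delta t^{3},
$$
and then pull a common factor $\Delta t^{3}/6$ out of both contributions to expose the operator $\bigl(I - c(S)\,V_mV_m^{T}\bigr)$ acting on $g^{T}\hat{\mathbf{J}}\,g$, where $c(S)$ is obtained by comparing the two coefficients. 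At that point the stated form of the corollary follows.

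The main obstacle is essentially bookkeeping rather than any new idea: one has to confirm that the $O(\delta t^{4})$ terms absorbed into the remainder, together with the $O(\Delta t^{4})$ tail of the EEM local error, really are higher order in $\Delta t$ (noting $\delta t^{4}=\Delta t^{4}/S^{4}$, which is uniform in $S$ when $S$ is fixed), so that they do not silently perturb the claimed leading coefficient. A secondary cross-check is to verify that at $S=1$ the correction term vanishes (since $\alpha(1)=0$ and no recycling occurs), so the formula should reduce to the pure EEM local error $\tfrac{1}{6}\Delta t^{3}\,g^{T}\hat{\mathbf{J}}\,g$; this is a useful sanity test and also pins down the algebraic factor in front of $V_mV_m^{T}$.
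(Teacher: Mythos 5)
Your decomposition is exactly the one the paper intends: the corollary is obtained by writing $u(t_n+\Delta t)-u_{n+1} = \bigl(u(t_n+\Delta t)-u_{n+1}^{\mathrm{EEM}}\bigr) - R_{n+1}^{S}$, substituting the quoted EEM leading error and the lemma's leading term for $R_{n+1}^{S}$ at $j=S$, and converting $\delta t$ to $\Delta t/S$. The paper gives no more detail than this one-line combination, so in structure your proposal matches it.

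There is, however, one concrete snag you should not gloss over with ``$c(S)$ is obtained by comparing the two coefficients'': if you use the lemma's stated closed form $\alpha(j)=(2j^{3}-3j^{2}+j)/24$ literally, as you do, you get $\alpha(S)\,\delta t^{3}=\frac{2S^{2}-3S+1}{24S^{2}}\Delta t^{3}$, and factoring out $\Delta t^{3}/6$ then yields a coefficient $\frac{2S^{2}-3S+1}{4S^{2}}$ in front of $V_mV_m^{T}$ --- half of the $\frac{2S^{2}-3S+1}{2S^{2}}$ claimed in the corollary. The resolution is that the lemma's closed form does not actually solve its own recursion: $\alpha(j+1)=\alpha(j)+j^{2}/2$ with $\alpha(1)=0$ gives $\alpha(j)=\tfrac{1}{2}\sum_{k=1}^{j-1}k^{2}=(2j^{3}-3j^{2}+j)/12$, not $/24$ (check $\alpha(2)=1/2$, whereas the $/24$ formula gives $1/4$). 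With the corrected $\alpha$, the arithmetic produces exactly the corollary's constant. Your $S=1$ sanity check cannot detect this, since $2S^{2}-3S+1$ vanishes there for either normalisation; a check at $S=2$ against the recursion would have exposed it. So the proof strategy is sound and identical to the paper's, but as written your computation derives a statement with the wrong constant, and fixing it requires noticing the inconsistency in the preceding lemma rather than just ``comparing coefficients.''
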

From this we can predict similar properties to the ETD1 recycling
scheme. This extends the work of \cite{2steppap}, where the recycling substepping EEM scheme was used for a single substep.

\begin{figure}[h]
\centering
\begin{minipage}[b]{0.49\linewidth}
(a) \\
\includegraphics[width=0.99\columnwidth]{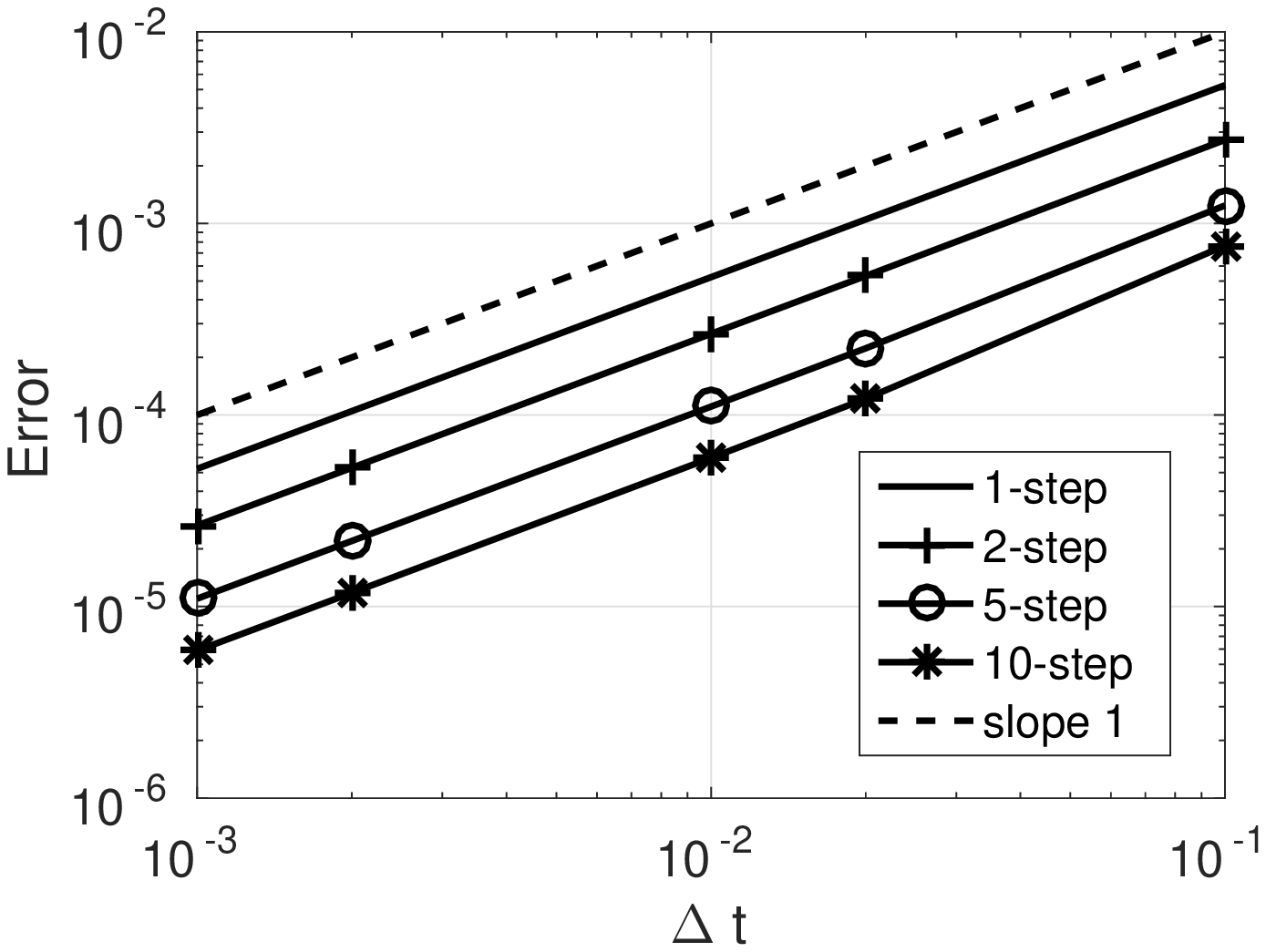}
\end{minipage}
\begin{minipage}[b]{0.49\linewidth}
(b) \\
\includegraphics[width=0.99\columnwidth]{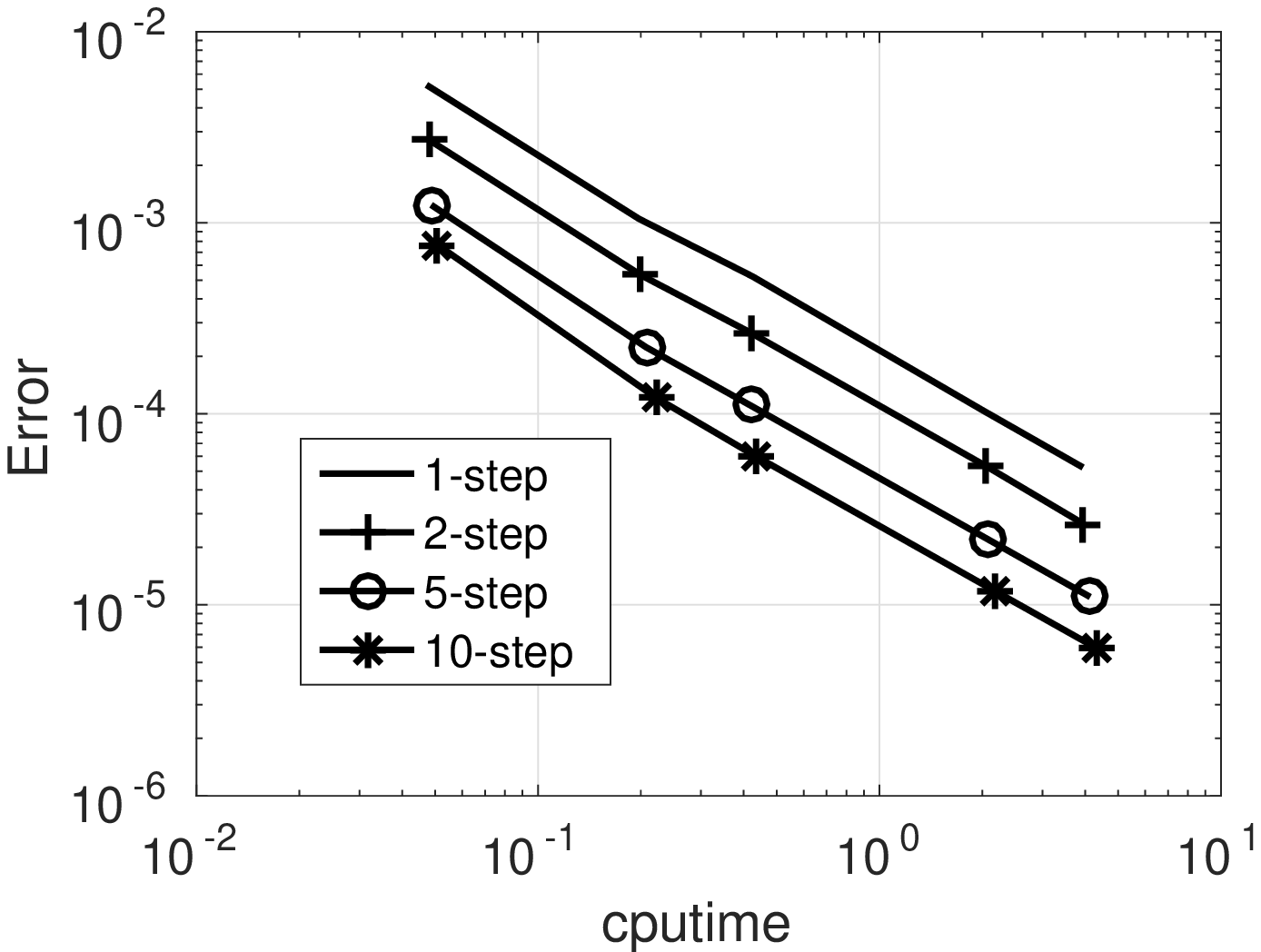}
\end{minipage}
\begin{minipage}[b]{0.49\linewidth}
(c) \\
\includegraphics[width=0.99\columnwidth]{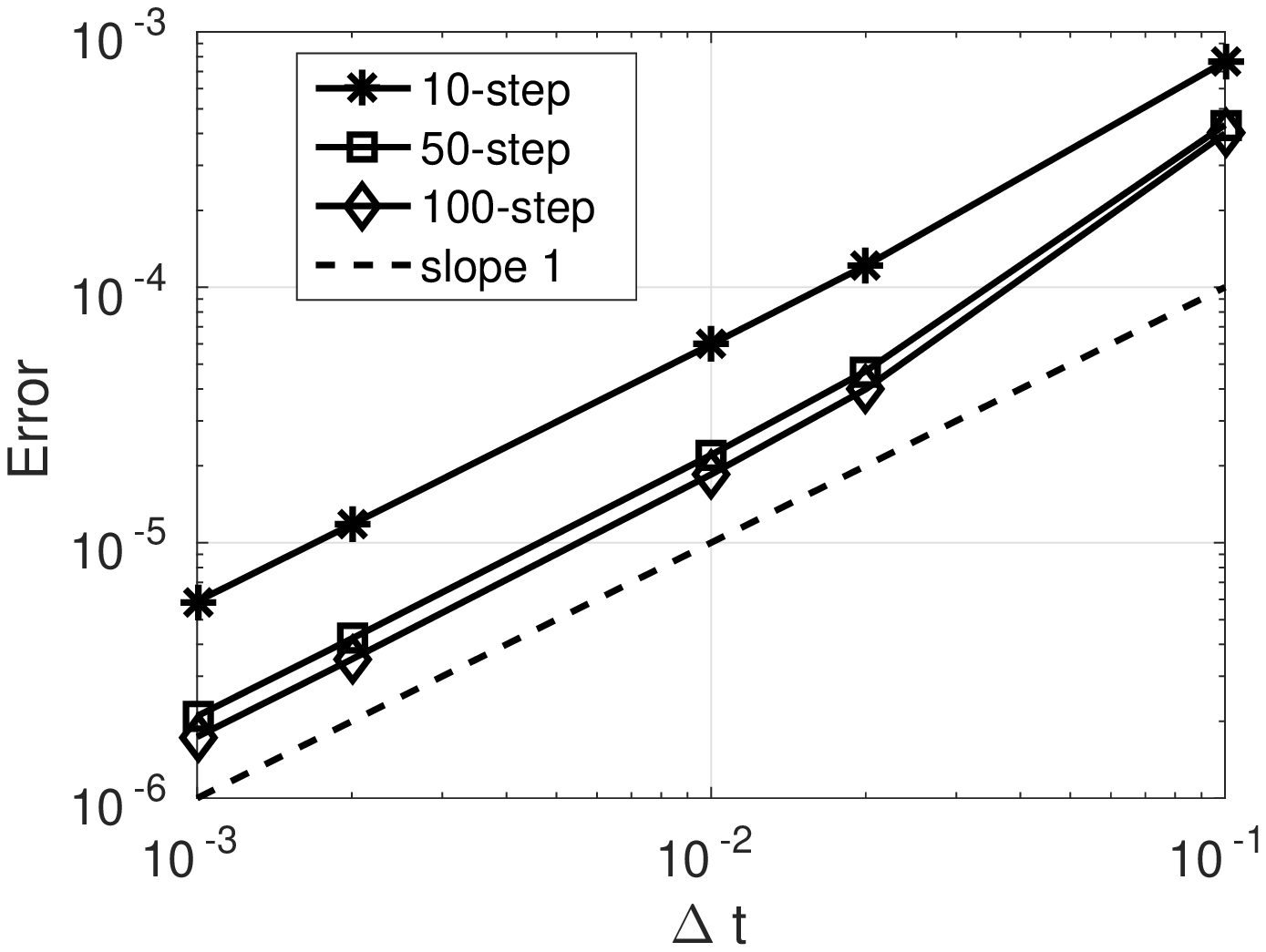}
\end{minipage}
\begin{minipage}[b]{0.49\linewidth}
(d) \\
\includegraphics[width=0.99\columnwidth]{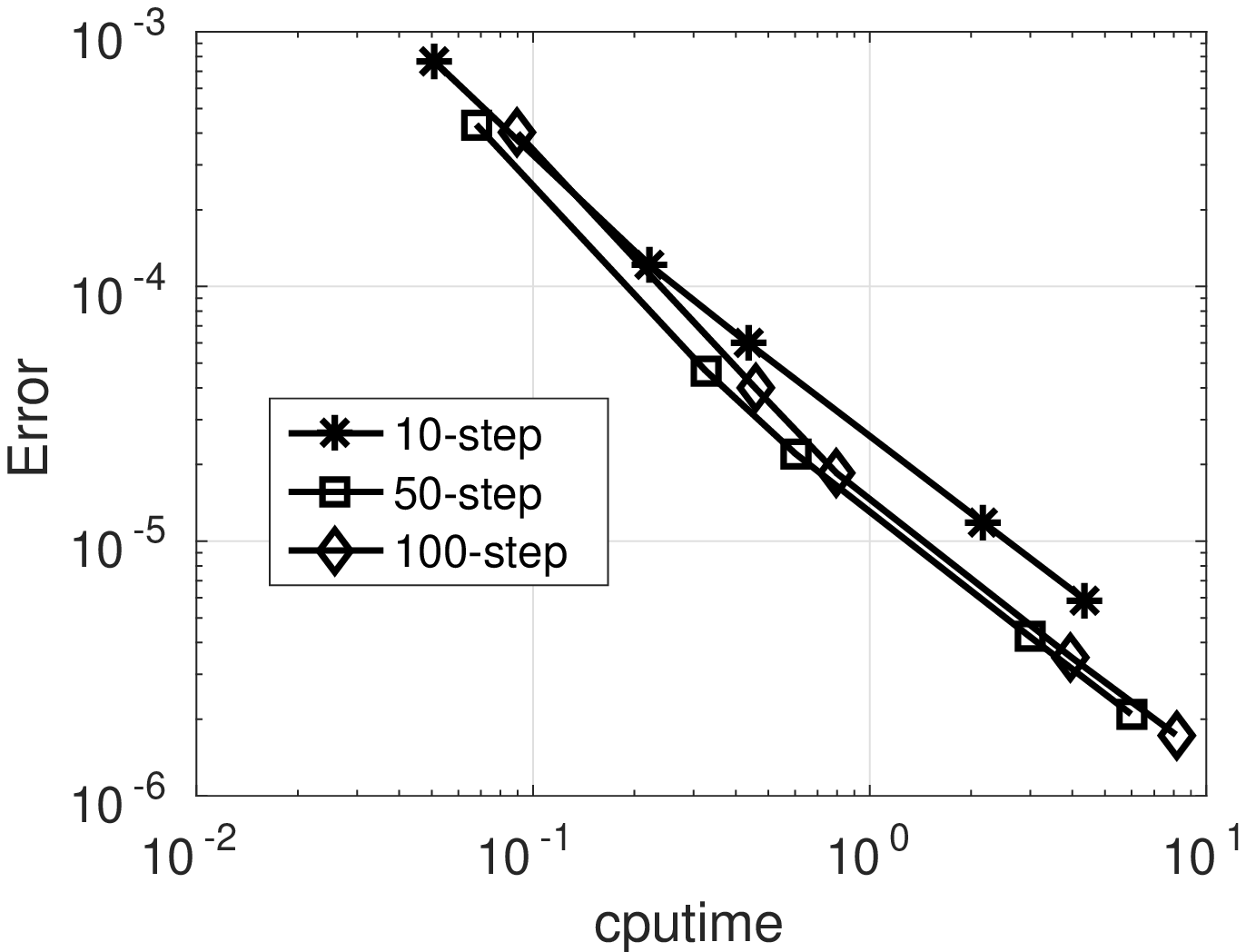}
\end{minipage}
\caption[Results for the substepping schemes, AC system]{Results for the substepping schemes applied to the Allen-Cahn type system. a) and c) display Estimated error against timestep $\Delta t$. b) and d) display estimated error against cputime, showing efficiency.}
\label{AC1-steps}
\end{figure}

\begin{figure}[h]
\centering
\begin{minipage}[b]{0.49\linewidth}
(a) \\
\includegraphics[width=0.99\columnwidth]{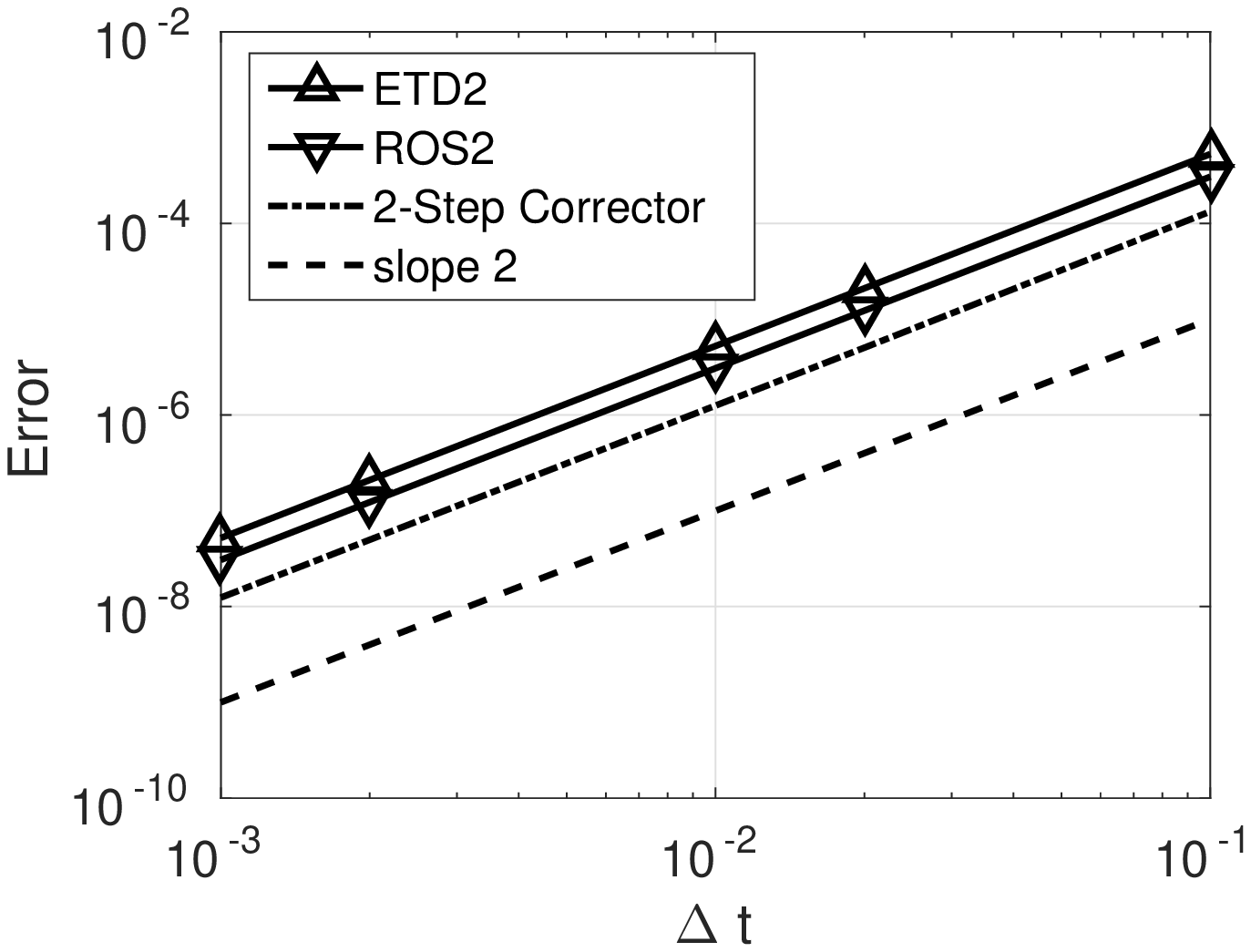}
\end{minipage}
\begin{minipage}[b]{0.49\linewidth}
(b) \\
\includegraphics[width=0.99\columnwidth]{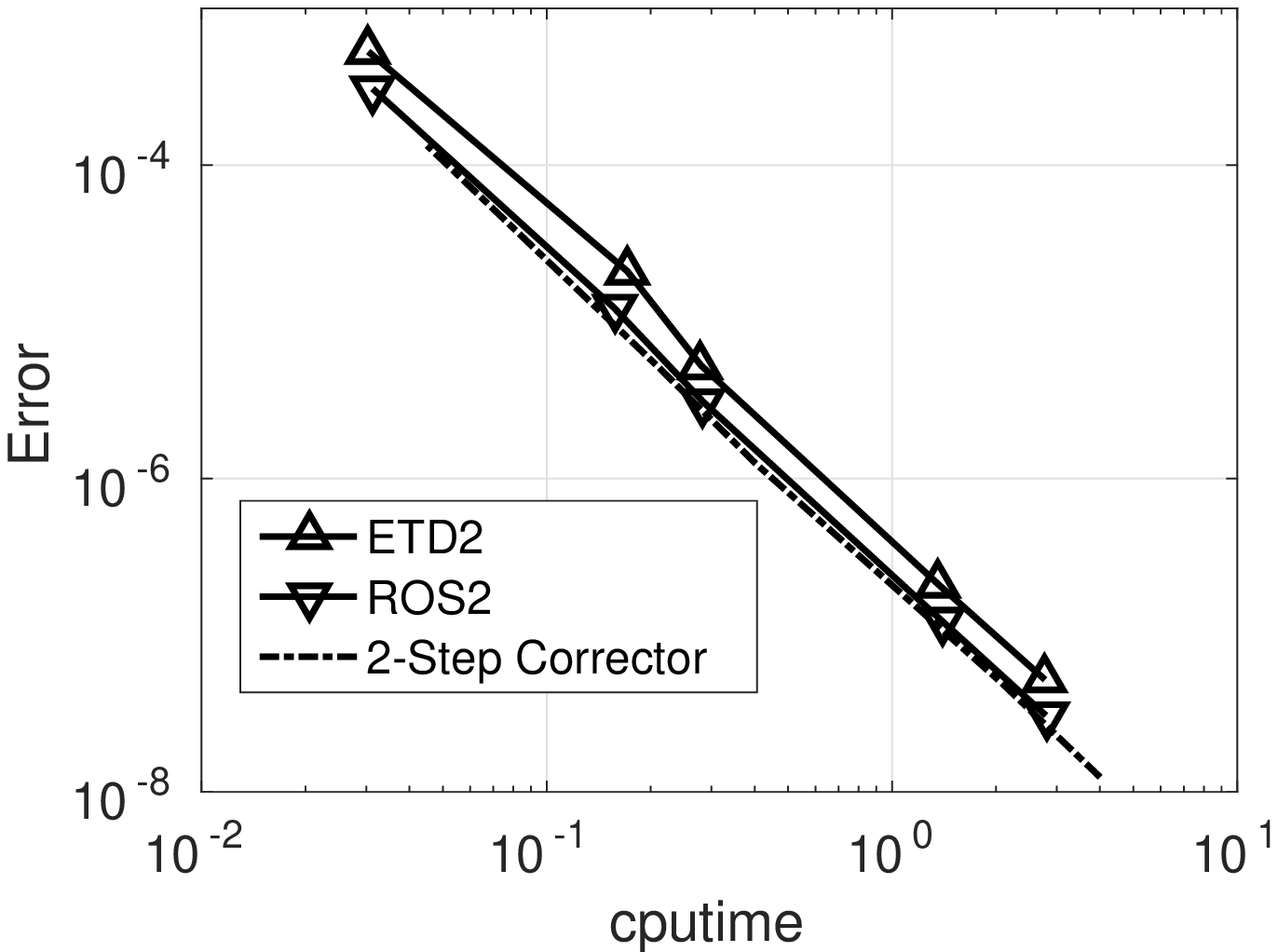}
\end{minipage}
\caption[Results for the second order substepping scheme, AC system]{AC system, Comparing the second order recycling-corrector scheme with ETD2 and ROS2. a) Estimated error against timestep $\Delta t$. b) Estimated error against cputime.}
\label{AC1-O2}
\end{figure}

\begin{figure}[h]
\begin{minipage}[b]{0.49\linewidth}
(a)\\
\includegraphics[width=0.99\columnwidth]{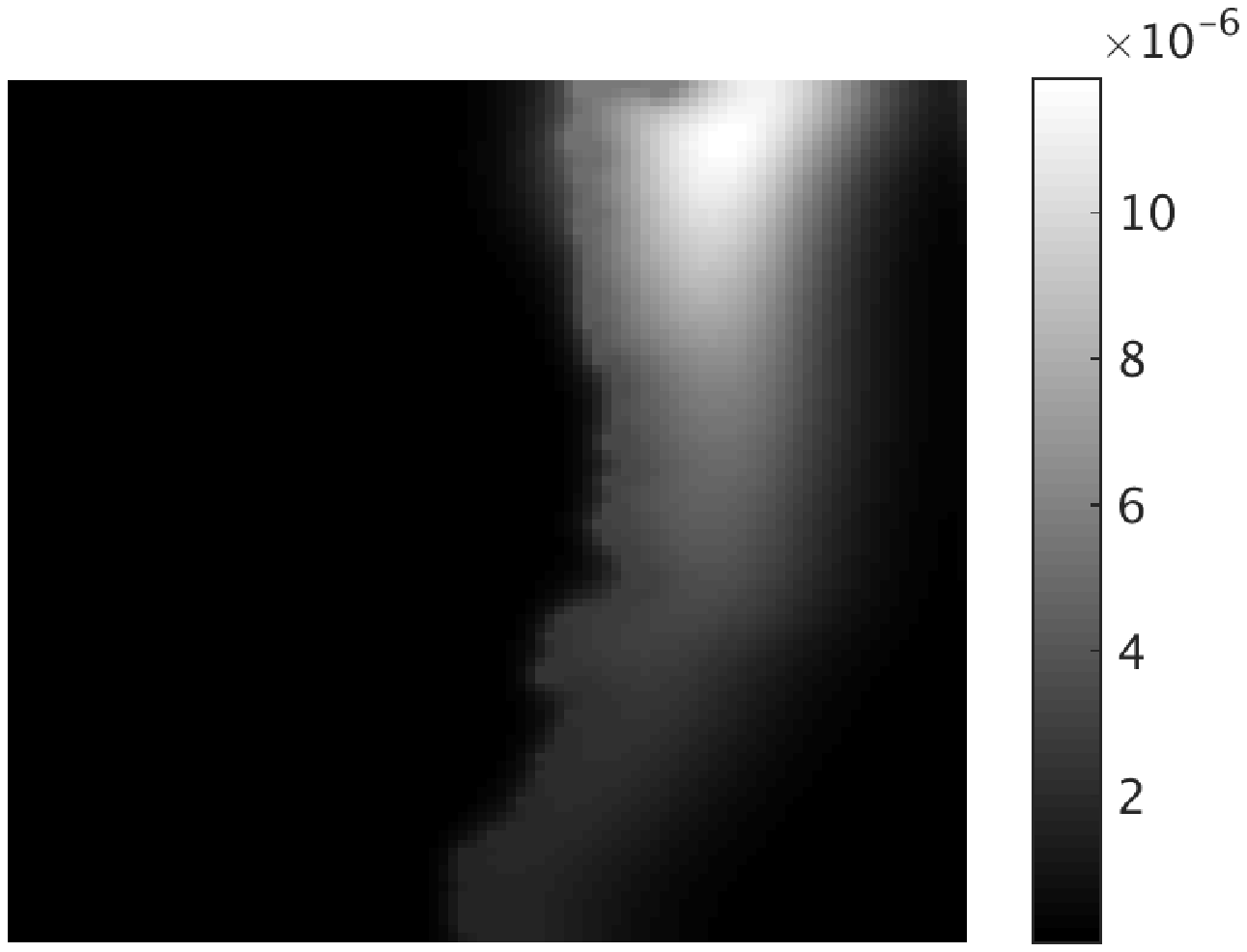}
\end{minipage}
\begin{minipage}[b]{0.49\linewidth}
(b)\\
\includegraphics[width=0.99\columnwidth]{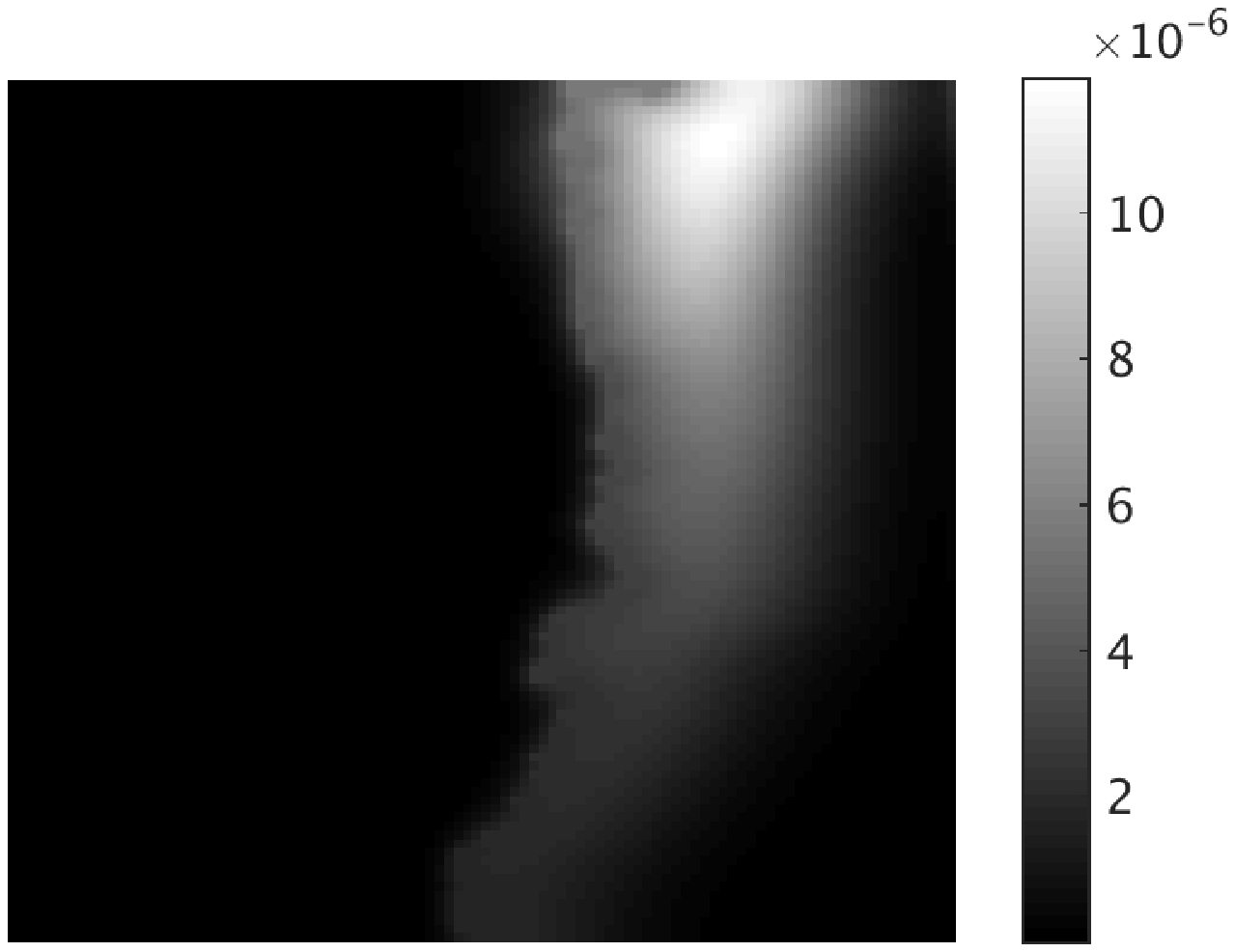}
\end{minipage}
\caption[Fracture system with Langmuir reaction]{The final state of the fracture system with Langmuir type reaction. a) Result produced by the 2-step scheme with $\Delta t = 2.4 \times 10^{-4}$. b) Result produced by ETD2 with $\Delta t = 2.4 \times 10^{-5}$.}
\label{frac-lang-show}
\end{figure}

\begin{figure}[h]
\centering
\begin{minipage}[b]{0.49\linewidth}
(a)  \\
  \includegraphics[width=0.99\columnwidth]{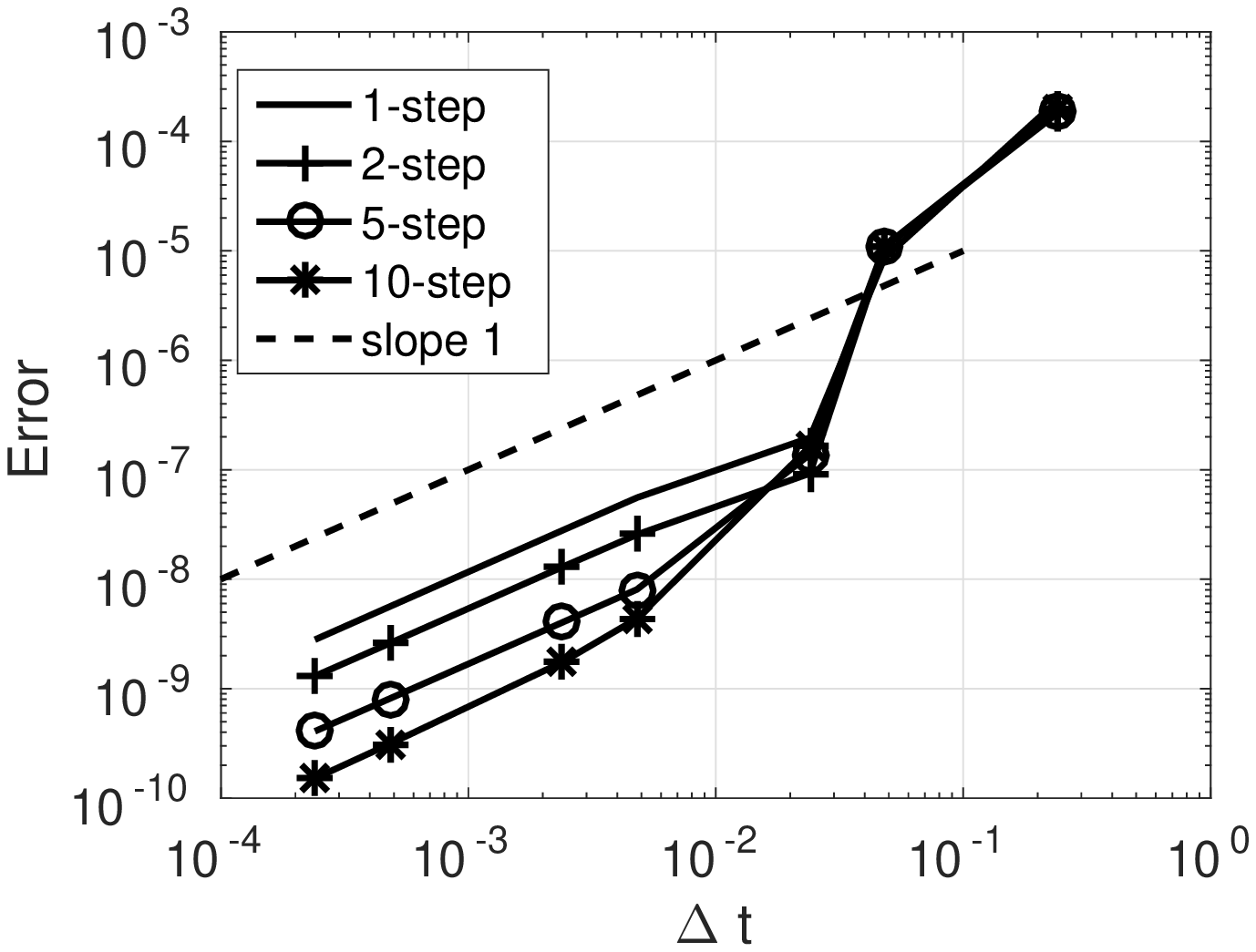}
\end{minipage}
\begin{minipage}[b]{0.49\linewidth}
(b) \\
\includegraphics[width=0.99\columnwidth]{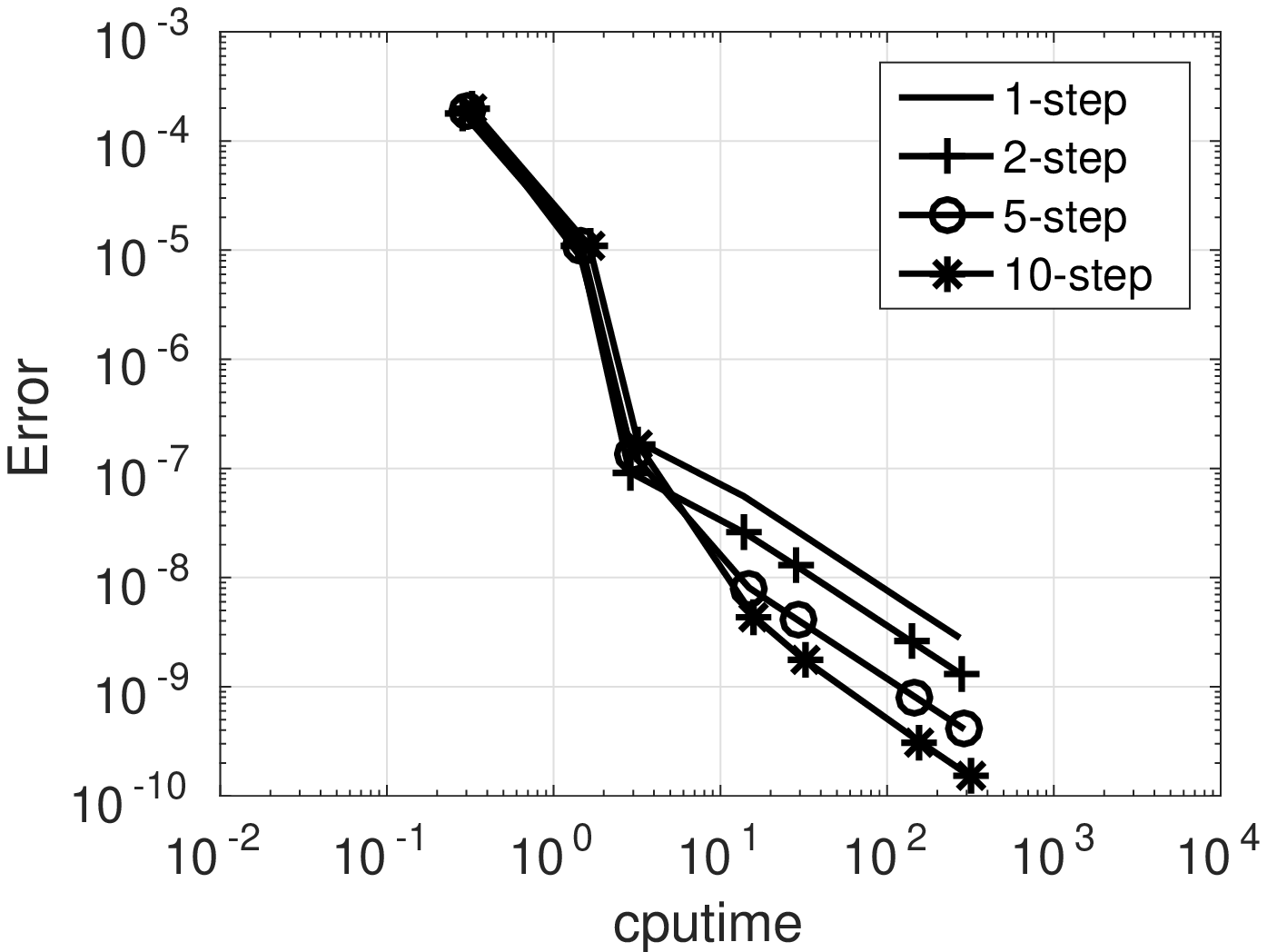}
\end{minipage}
\begin{minipage}[b]{0.49\linewidth}
(c) \\
\includegraphics[width=0.99\columnwidth]{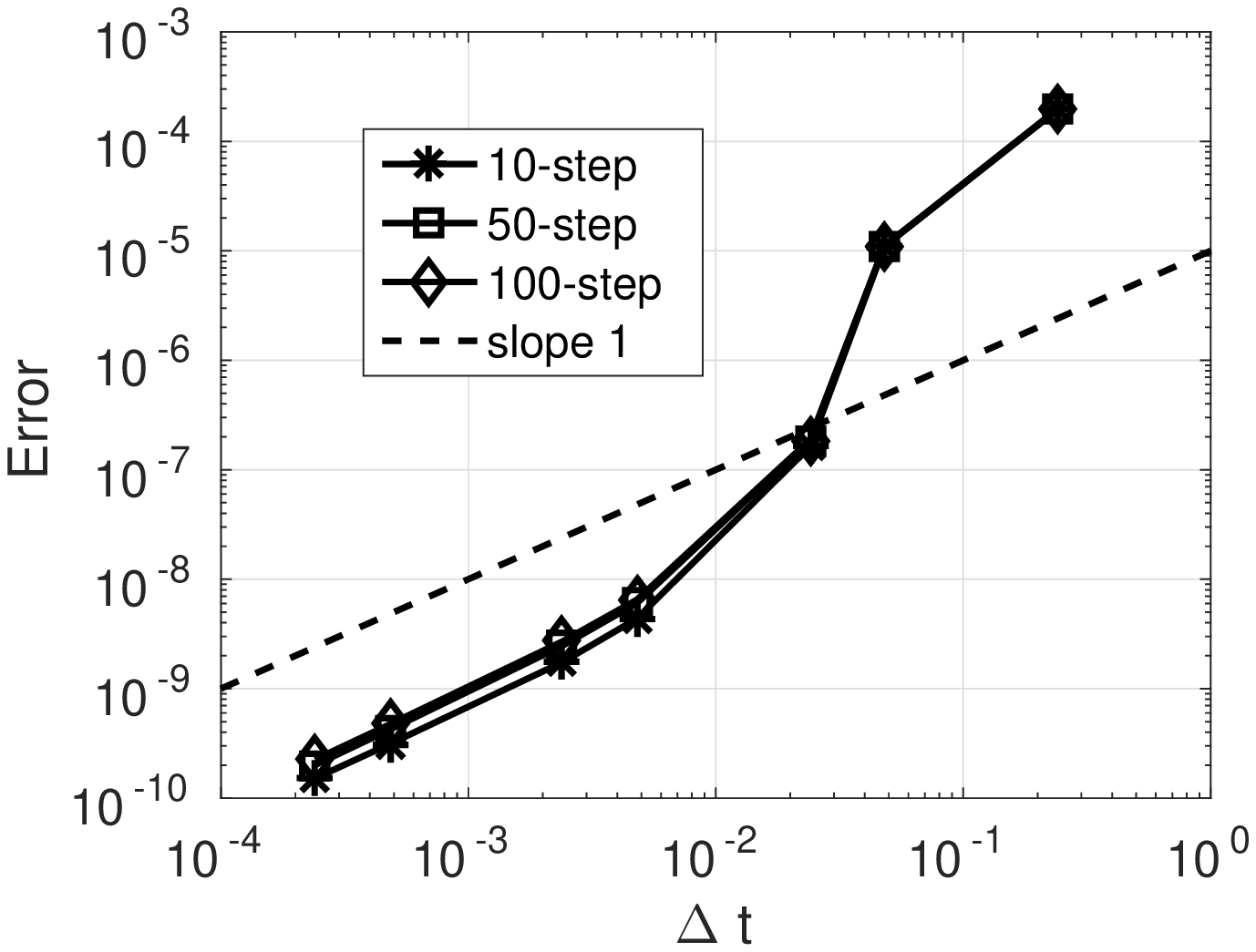}
\end{minipage}
\begin{minipage}[b]{0.49\linewidth}
(d) \\
\includegraphics[width=0.99\columnwidth]{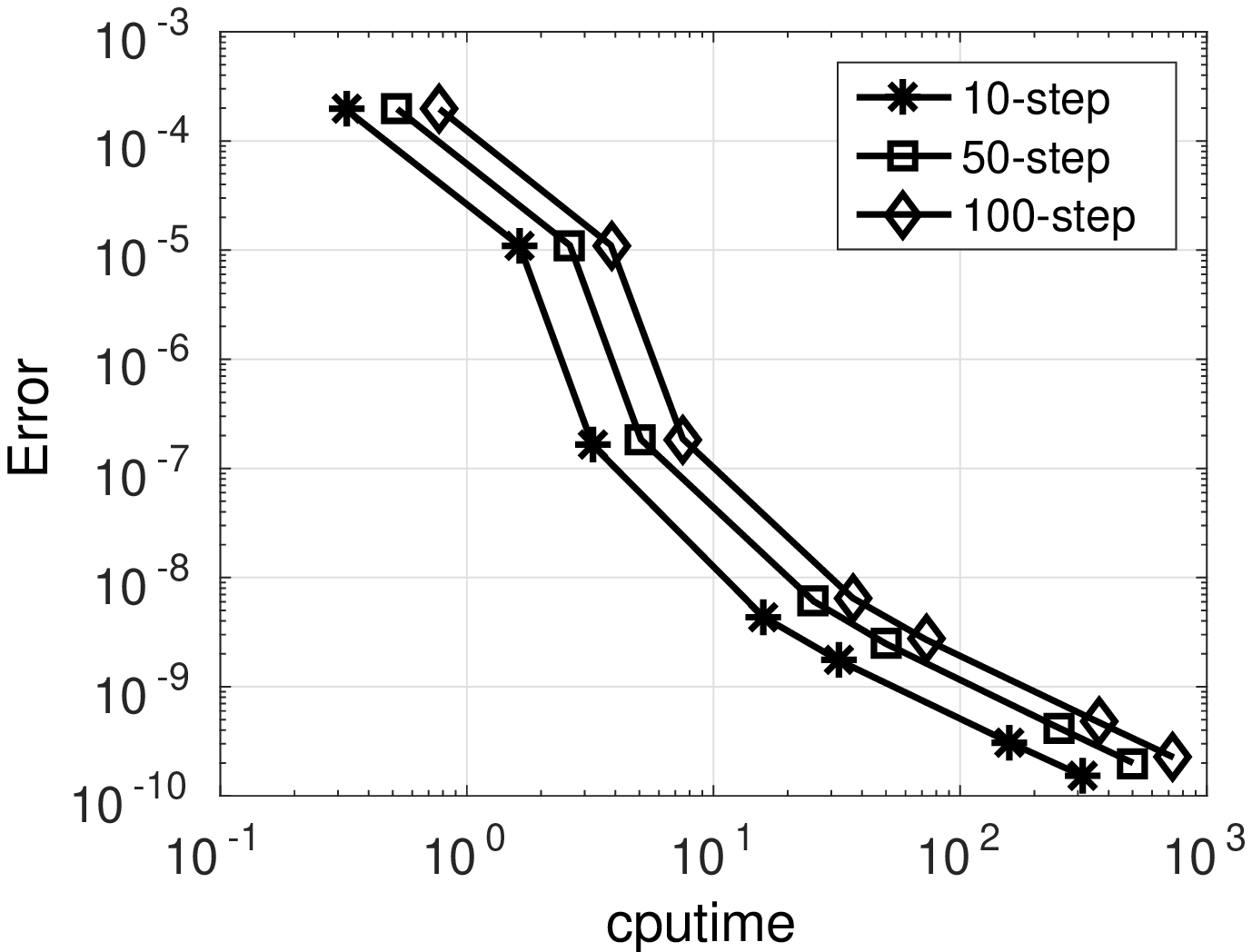}
\end{minipage}

\caption[Results for the substepping schemes, Langmuir type reaction system]{Results for the substepping schemes applied to the Langmuir type reaction system. a) and c) display Estimated error against timestep $\Delta t$. b) and d) display estimated error against cputime, showing efficiency. In c), points for the $50$ step scheme are marked with circles, and points with the $100$ step scheme are marked with triangles, to help distinguish the (very similar) results for the two schemes. This is also done in plot d) for consistency.}
\label{frac-lang-steps}
\end{figure}

\begin{figure}[h]
\centering
\begin{minipage}[b]{0.49\linewidth}
(a) \\
\includegraphics[width=0.99\columnwidth]{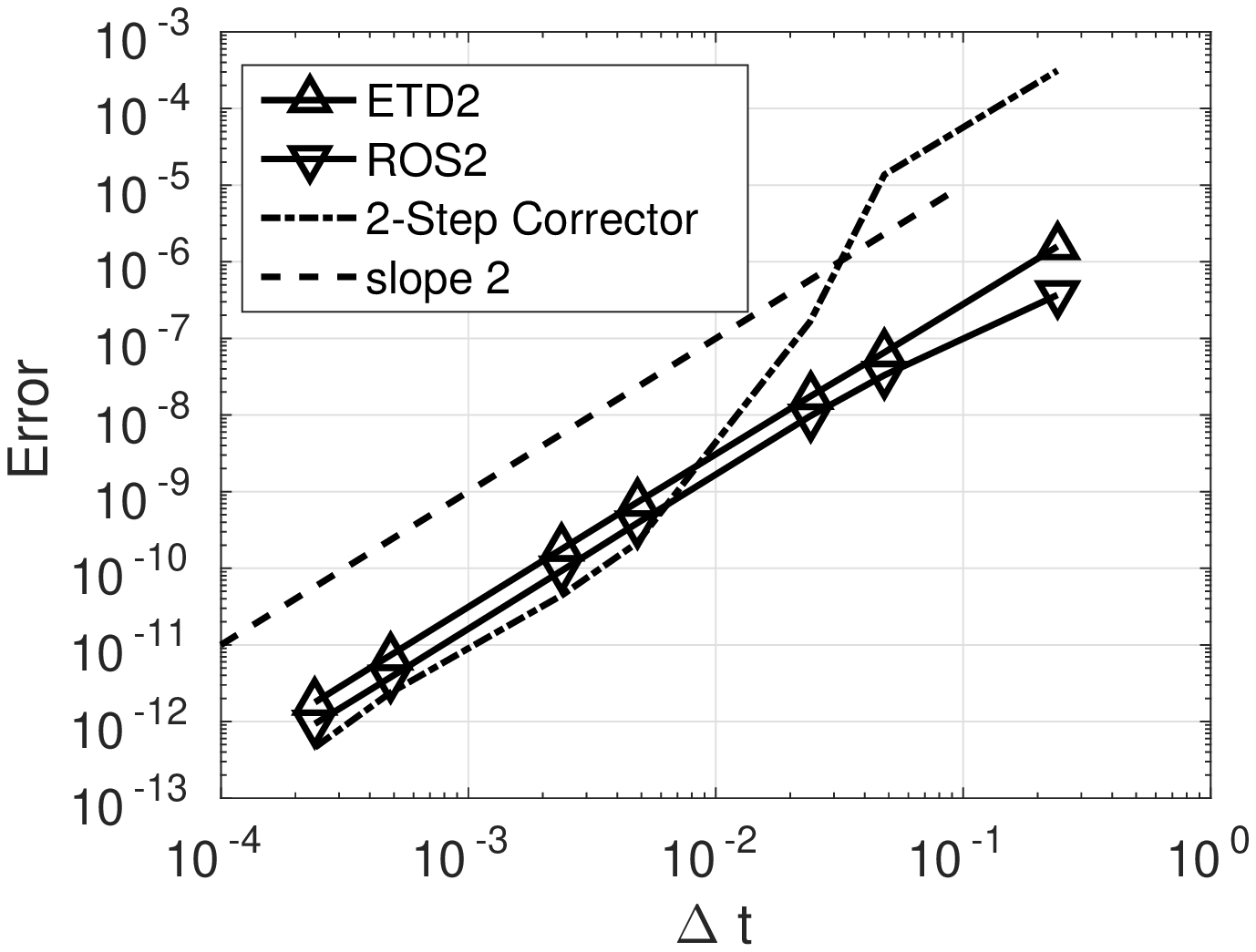}
\end{minipage}
\begin{minipage}[b]{0.49\linewidth}
(b) \\
\includegraphics[width=0.99\columnwidth]{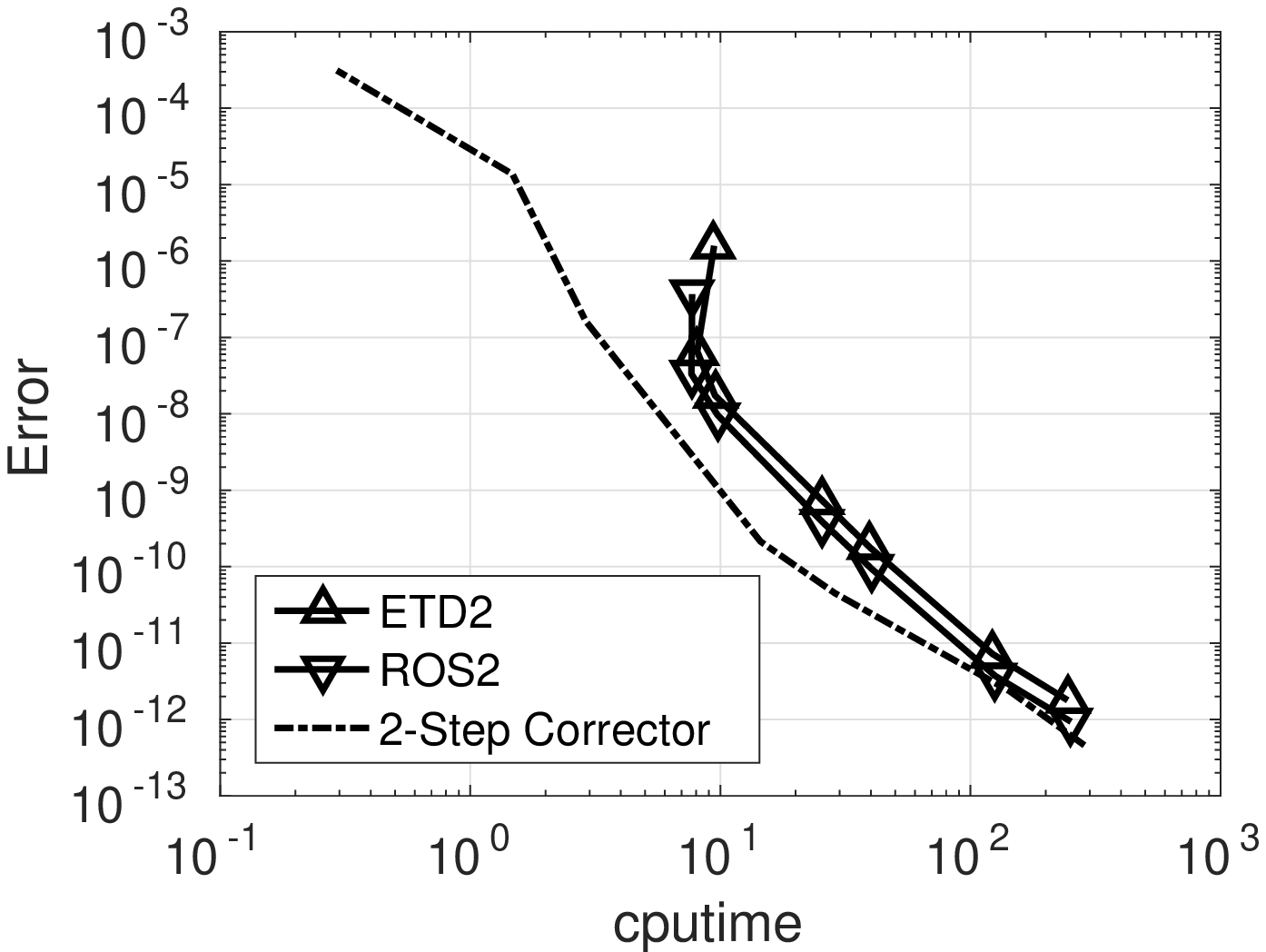}
\end{minipage}
\caption[Results for the second order substepping scheme, Langmuir type reaction system]{Langmuir type reaction system, Comparing the second order recycling-corrector scheme with ETD2 and ROS2. a) Estimated error against timestep $\Delta t$. b) Estimated error against cputime.}
\label{frac-lang-cor}
\end{figure}

\begin{figure}[h]
\centering
\begin{minipage}[b]{0.49\linewidth}
(a) \\
\includegraphics[width=0.99\columnwidth]{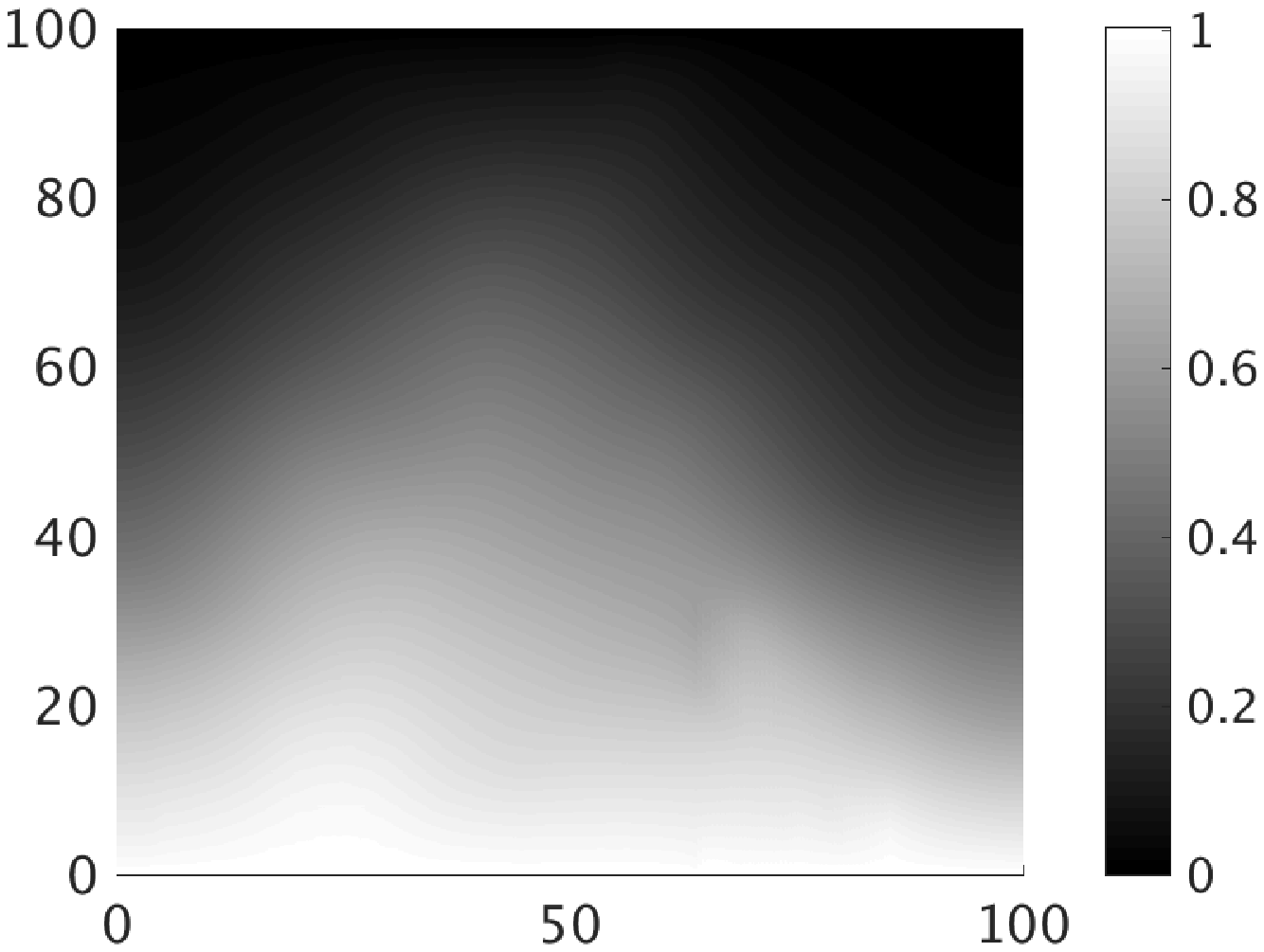}
\end{minipage}
\begin{minipage}[b]{0.49\linewidth}
(b) \\
\includegraphics[width=0.99\columnwidth]{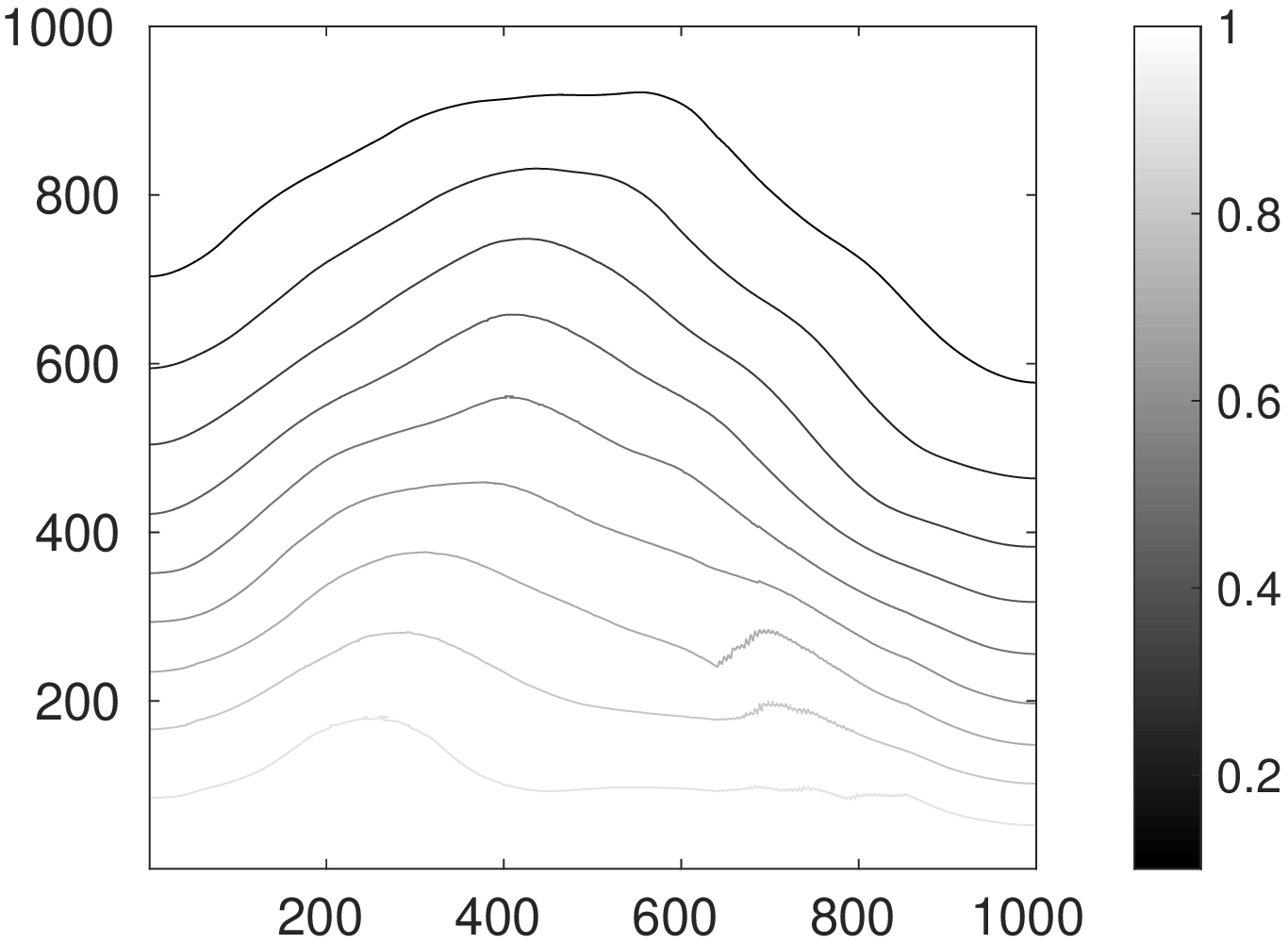}
\end{minipage}
\caption{Result for the example in \secref{large}, in which solute
  enters through the lower boundary and flows according to a random
  velocity field. Produced by the 10-step recycling scheme with
  $\Delta t = 0.2441$, i.e. $2048$ steps. a) Shows the system at the final time $T=500$; the axes indicates the physical dimensions (i.e., the domain is $100 \times 100$ metres). b) Is a corresponding contour plot for clarity; the axes indicate the cells in the finite volume grid (i.e., the grid has $1000$ cells along each side).}
\label{giant-show}
\end{figure}

\begin{figure}[h]
\centering
\begin{minipage}[b]{0.49\linewidth}
(a) \\
\includegraphics[width=0.99\columnwidth]{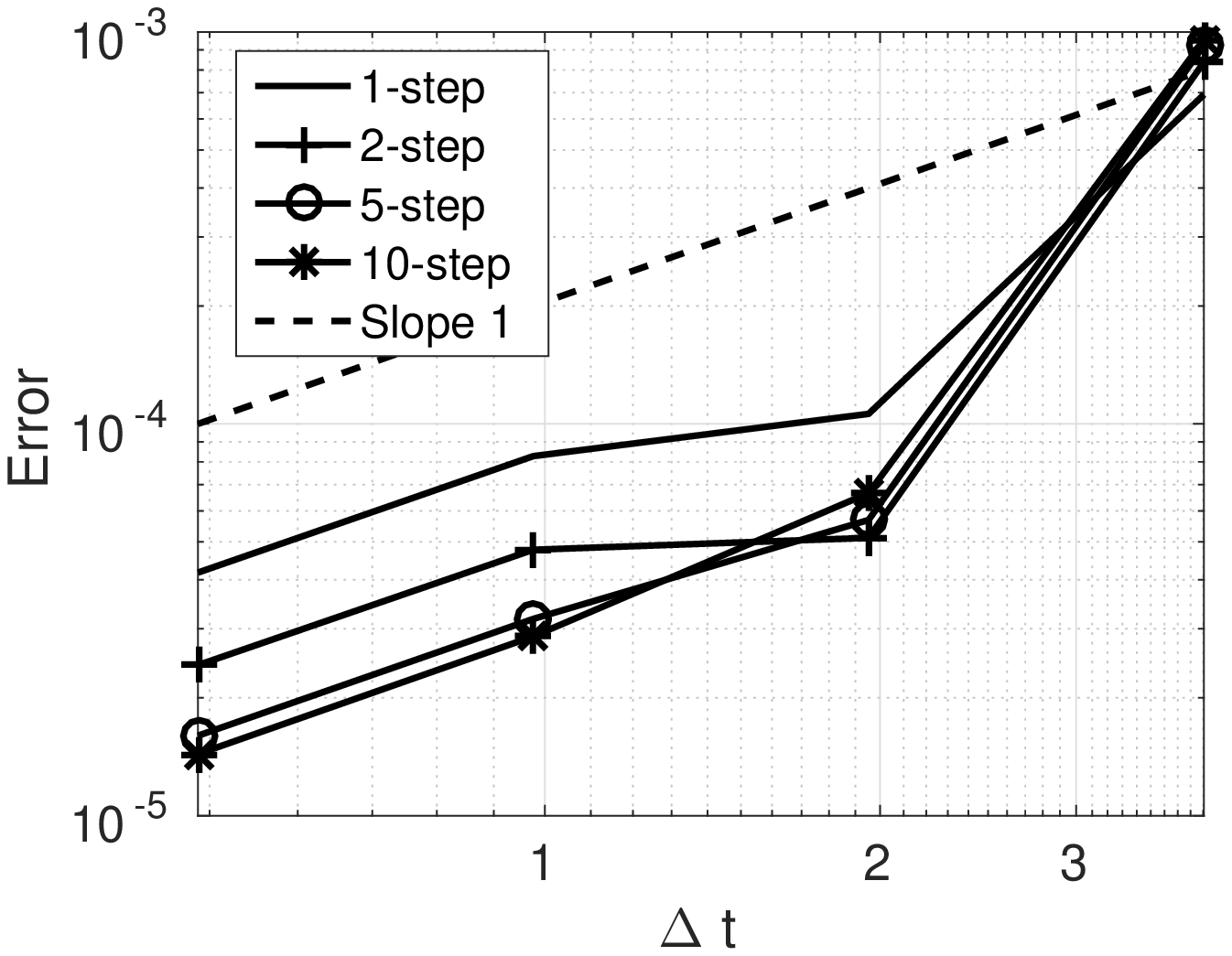}
\end{minipage}
\begin{minipage}[b]{0.49\linewidth}
(b) \\
\includegraphics[width=0.99\columnwidth]{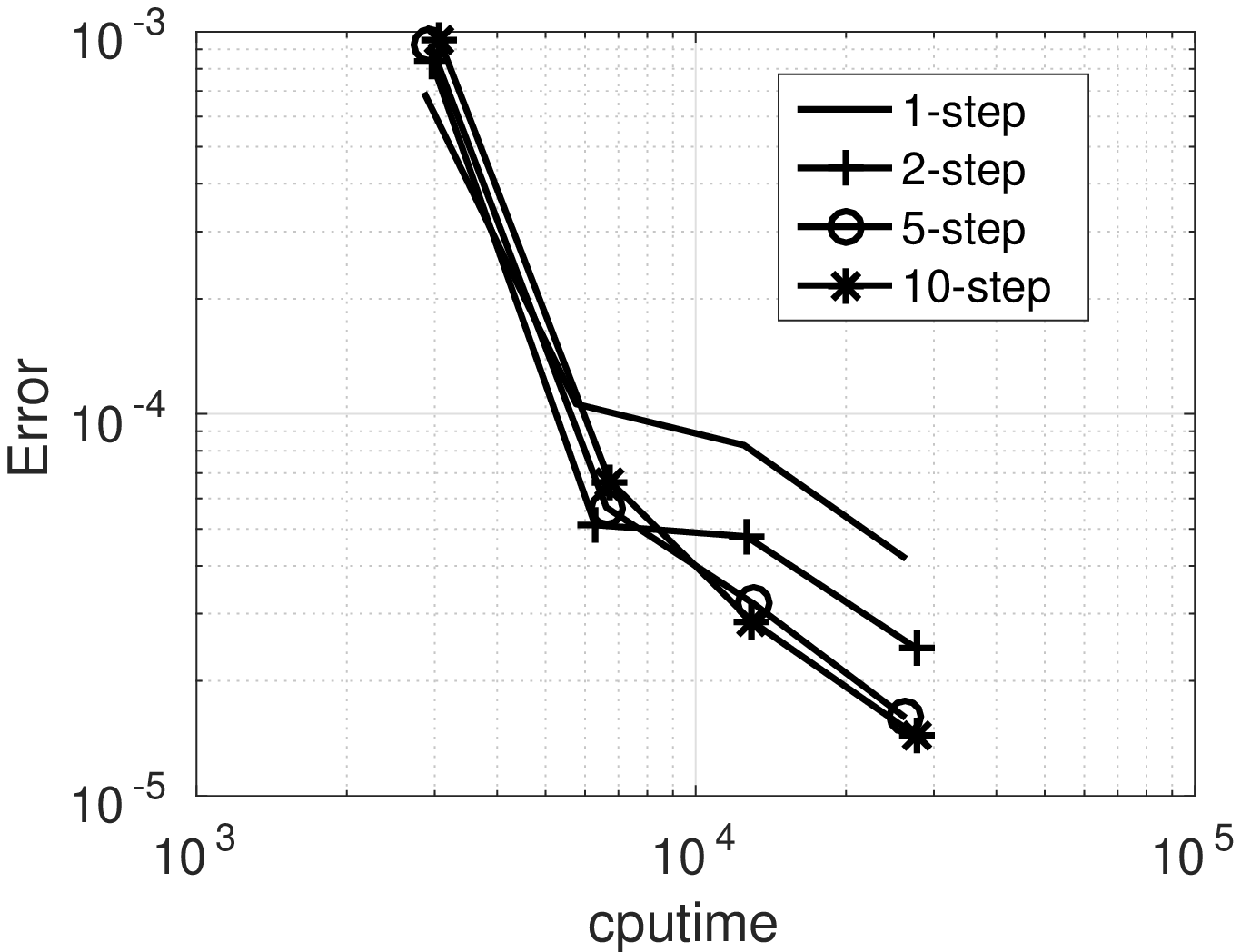}
\end{minipage}
\caption{Results for the  substepping scheme applied to the large
  Langmuir type reaction system in \secref{large}. (a) Estimated
  errors against timestep $\Delta t$ and (b) displays estimated error
  against cputime, showing efficiency. Note that for the largest
  timestep the error is dominated by the Krylov error as $m$ is too small for the given $\Delta t$ (c.f. Assumption \ref{asskryerr} )}  
\label{giant-results}
\end{figure}

\end{document}